\documentclass[12pt]{article}
\usepackage{amsfonts}
\usepackage{amsmath,amssymb,amsfonts,amsthm}
\usepackage[usenames]{color}

\usepackage[utf8]{inputenc}

\usepackage{url}

\pagenumbering{arabic}

\setcounter{MaxMatrixCols}{10}

\addtolength{\oddsidemargin}{-.875in}
\addtolength{\evensidemargin}{-.875in}
\addtolength{\textwidth}{1.75in}

\addtolength{\topmargin}{-.875in}
\addtolength{\textheight}{1.75in}


\theoremstyle{definition}

\newtheorem{definition}{Definition}
\newtheorem{theorem}{Theorem}

\newtheorem{lemma}{Lemma}

\newtheorem{proposition}{Proposition}
\newtheorem{corollary}{Corollary}


\newcommand{\C}{\mathbb{C}}
\newcommand{\R}{\mathbb{R}}
\newcommand{\Z}{\mathbb{Z}}

\pagestyle{empty}

\pagestyle{empty}

\title{\textsc{ Spherical analysis on homogeneous vector bundles of the 3-dimensional euclidean motion group}\thanks{This work has been supported by a fellowship from Consejo Nacional de Investigaciones Cient\'ificas y T\'ecnicas and reserch grants from Secretar\'ia de Ciencia y Tecnolog\'ia, Universidad Nacional de C\'ordoba and Consejo Nacional de Investigaciones Cient\'ificas y T\'ecnicas (Argentina).}}
\author{\textsc{Rocío Díaz Martín \ and \  Fernando Levstein}}
\date{}

\begin{document}
\maketitle

\maketitle
\begin{center}
\begin{abstract}
We consider $\R^3$ as  a homogeneous manifold for the action of the motion group given by rotations and translations. 
For an arbitrary $\tau\in \widehat{SO(3)}$, let $E_\tau$ be the homogeneous vector bundle over $\R^3$ 
associated with $\tau$. An interesting problem consists in studying  the set of  bounded linear operators over the sections of $E_\tau$ that are invariant under the action of $SO(3)\ltimes \R^3$. 
Such operators are in correspondence with the $End(V_\tau)$-valued, bi-$\tau$-equivariant, integrable functions on $\R^3$  and they form a commutative algebra with the convolution product. We develop the spherical analysis on that algebra, 
explicitly computing the $\tau$-spherical functions. We first present a set of generators  of the algebra of $SO(3)\ltimes \R^3$-invariant differential operators on $E_\tau$. We also give an explicit form for the $\tau$-spherical Fourier transform, we deduce an inversion formula and we use it to give a characterization of $End(V_\tau)$-valued, bi-$\tau$-equivariant, functions on $\R^3$.
\end{abstract}
\end{center}
\textbf{Keywords}: {Harmonic Analysis - Strong Gelfand Pairs - Spherical Transforms - Matrix Spherical Functions}.
\\ \\
\textbf{Mathematics Subject Classification (2000)}: {43A90 - 43A85}.
\pagestyle{headings}
\pagenumbering{arabic}
\begin{center}
\textsc{Acknowledgements:}
We are immensely grateful to Linda Saal and Fulvio Ricci for their guidance and hospitality.
\end{center}
\newpage
\section{Introduction and preliminaries}
\label{intro}
Let $G$ be a connected Lie group and $K$ a compact subgroup of $G$. Let $\tau$ be a complex representation of $K$ on the vector space $V_\tau$ of finite dimension $d_\tau$. Up to isomorphism, every homogeneous vector bundle over $G/K$ is of the form $E_\tau=G\times_\tau V_\tau$   (\cite{Wal} page 115, 5.2.2 and Lemma 5.2.3). 
The sections of $E_\tau$ are naturally identified with the space of $V_\tau$-valued functions $u$ on $G$ such that 
\begin{equation}\label{section}
u(gk)=\tau(k^{-1})u(g) \ \ \ \forall g\in G, k\in K.
\end{equation}
From now on we consider  $\tau$ an irreducible unitary representation. 
The objective  is to characterize all the linear and continuous (with respect to the corresponding standard topologies) integral operators over the sections of $E_\tau$ that commute with the action of $G$ on $E_\tau$. From the Schwartz kernel theorem, every  such operator  $T$ can be represented in a unique way as a convolution operator 
\begin{equation}
Tu(g)=\int_G F(gh^{-1})u(h)  \ dh
\end{equation}
and its kernel $F$ is an $End(V_\tau)$-valued function (or distribution) on $G$ that satisfies the identity
\begin{equation}\label{kernelproperty}
F(k_1gk_2)=\tau(k_2^{-1})F(g)\tau(k_1^{-1}) \ \ \ \forall g\in G , \ k_1, k_2\in K .
\end{equation}
In particular, convolution operators $T=T_F$ with kernel $F\in L^{1}_{\tau,\tau}(G,End(V_\tau))$, the space of $End(V_\tau)$-valued integrable functions on $G$ satisfying (\ref{kernelproperty}), can be composed with each other and $T_{F_1}\circ T_{F_2}=T_{F_1*F_2}$, where
\begin{equation}
F_1*F_2(g)=\int_G F_1(h^{-1}g)F_2(h) dh.
\end{equation}
The main goal of the subject, involving articles such as \cite{Camporesi} and \cite{Fulvio}, is  to reproduce Fourier spherical analysis on the algebra  $L^{1}_{\tau,\tau}(G,End(V_\tau))$ in order to deduce an inversion and Plancherel theorem, and to have 
 simultaneous diagonalization of the set of $G$-invariant linear and continuous integral operators on the sections of an homogeneous vector bundle on $G/K$. Therefore, as in linear algebra, a necessary  condition  is that these operators must commute. This motivates the following definition:  
\begin{definition} Let $G, K$ and $\tau$ be as before, then
$(G,K,\tau)$ is said to be a \textit{commutative triple} if the algebra $L^1_{\tau,\tau}(G, End(V_\tau))$ with the convolution product is commutative. When this holds for every irreducible unitary  representation $\tau$ of $K$, $(G,K)$ is called a \textit{strong Gelfand pair}.  
\end{definition}
In particular, when $\tau$ is the trivial representation of $K$, the definition of commutative triple is equivalent to say that $(G,K)$ is a \textit{Gelfand pair}.
\\ \\
There are many equivalent statements to the previous definition. For instance, let $\widehat{G}$ be the set of equivalence classes of the irreducible unitary representations of the group $G$. 
Godement's Criterion \cite{G} states that
$(G,K,\tau)$ is a commutative triple if and only if, for any $\pi\in \widehat{G}$, the multiplicity of $\tau$ in $\pi_{|_{K}}$ is at most one. In particular, $(G,K)$ is a strong Gelfand pair if and only if, for all $\pi\in \widehat{G}$, $\pi_{|_{K}}$ is multiplicity free. 
\\ \\
The 
linear functionals that will change the integral $G$-invariant operators into multiplicative operators,  
 lie on the dual space of $L^1_{\tau,\tau}(G, End(V_\tau))$,  so they are represented by bounded $End(V_\tau)$-valued functions on $G$ that satisfy (\ref{kernelproperty}) (by the Riesz representation theorem) and these must be, in some sense, “eigenfunctions” of all the invariant operators. They are called spherical functions, and the transformation that makes this change 
 is the spherical Fourier transform.      
\begin{definition}
Let $(G,K,\tau)$  be a commutative triple. A non-trivial function $\Phi$ in the space $L^{\infty}_{\tau,\tau}(G, End(V_\tau))$ is said to be a \textit{$\tau$-spherical function} if the map
\begin{equation}
F\longmapsto (\mathcal{F}(F))(\Phi):=\frac{1}{d_\tau}\int_G Tr[F(g)\Phi(g^{-1})] \ dg= \frac{1}{d_\tau} Tr[F*\Phi (e)] 
\end{equation}
is a homomorphism of $L^1_{\tau,\tau}(G, End(V_\tau))$ into $\C$. 
That map is called the \textit{$\tau$-spherical Fourier transform}.
\end{definition}
We concentrate our attention on the case where $G=K\ltimes N$, $N$ being a connected Lie group and $K$ a compact subgroup of automorphisms of $N$. 
The general theory about this special case has been recently studied by F. Ricci and A. Samanta in \cite{Fulvio}, and it is particularly interesting since in this case the objects to be studied are integral operators on sections of homogeneous vector bundles of the Lie group $N$ that commute with translations and with the action of $K$. The case where $G$ is a connected noncompact semisimple Lie group with finite center and $K$ is a maximal compact subgroup has been studied for example by Camporesi \cite{Camporesi}.
  \\ \\
We denote by $k\cdot x$ the action of $k\in K$ on $x\in N$. The product on $K\ltimes N$ is given by
\begin{equation*}
(k,x)(k',x')=(kk',x(k\cdot x')).
\end{equation*}
The sections of $E_\tau$, i.e., the $V_\tau$-valued functions $u$ on $G$ satisfying the identity (\ref{section}), are identified with $V_\tau$-valued functions $u_0$ on $N$ via
the map given by
\begin{equation}\label{identification}
u_0(x)\longmapsto u(k,x)=\tau(k)^{-1}u_0(x).
\end{equation}
The action of $G$ on a section $u$ is reinterpreted as the action of $N$ on $u_0$ by left translations and the action of $K$ on $u_0$ given by 
\begin{equation}\label{K-action}
(k,u_0(x))\longmapsto \tau(k)u_0(k^{-1}\cdot x).
\end{equation}
Similarly, convolution operators on sections of $E_\tau$ (that commute with the action of $G$) are identified with convolution operators that apply to $V_{\tau}$-valued functions on $N$, with kernel function $F_0:N\rightarrow End(V_\tau)$ that satisfies the identity
\begin{equation}\label{Kinv}
F_0(k\cdot x)=\tau(k)F_0(x)\tau(k)^{-1}.
\end{equation}
Equivalently,  the corresponding function on $G=K\ltimes N$ via the map (\ref{identification}), $F(k,x)=\tau(k)^{-1}F_0(x)$, is in $L^1_{\tau,\tau}(G, End(V_\tau))$.
\\ \\
We denote by $L^1_{\tau}(N)$ the set of $End(V_\tau)$-valued functions on $N$ that satisfy (\ref{Kinv}) and, as above, composition of integral operators corresponds to convolution of functions in $L^1_{\tau}(N)$. 
Therefore, $(K\ltimes N, K,\tau)$ is a commutative triple if and only if $L^1_{\tau}(N)$ is commutative.
In such a case, a non-trivial function $\Phi\in L^{\infty}_{\tau}(N)$ is said to be a \textit{$\tau$-spherical function} if the map
\begin{equation}
F\longmapsto (\mathcal{F}(F))(\Phi):=\frac{1}{d_\tau}\int_N Tr[F(x)\Phi(x^{-1})] \ dx
\end{equation}
is a homomorphism of $L^1_{\tau}(N)$ into $\C$.
It can be proved that a function $\Phi$ is a $\tau$-spherical function on $N$ if and only if the corresponding function via the identification map (\ref{identification}) is a $\tau$-spherical function on $K\ltimes N$  (\cite{Fulvio} Theorem 6.3).
\\ \\
There is a reformulation of  Godement's Criterion proved in \cite{Fulvio} Theorem 6.1. Given $k\in K$ and $[\pi]\in\widehat{N}$, $K$ acts on $\widehat{N}$ by $(k,[\pi])\longmapsto [\pi^k]$,  where $\pi^{k}(x):=\pi(k^{-1}\cdot x)$. If we denote by  $K_{\pi}$ the stabilizer of $[\pi]$, then, for $k\in K_\pi$, there is a unitary  operator intertwining $(\pi,\mathcal{H}_\pi)$ and $(\pi^k,\mathcal{H}_\pi)$, that we call $\delta(k)$. So $\delta$ defines a projective representation of $K_\pi$.
Then, $(K\ltimes N, K,\tau)$ is a commutative triple if and only if, for any irreducible unitary representation $\pi$ of $N$, the representation $\delta\otimes(\tau_{|_{K_{\pi}}})$ is multiplicity free.
\\ \\
Let $\mathbb{D}(E_\tau)$ be the algebra of $G$-invariant differential operators acting on the smooth sections of the homogeneous vector bundle $E_\tau$. When $G=K\ltimes N$, once the sections on $E_\tau$  have been identified with $V_\tau$-valued functions on $N$, we can also identify $\mathbb{D}(E_\tau)$  with an algebra of differential operators on $N$.
Let $\mathbb{D}(N)$ be the algebra of left-invariant differential operators on $N$, we denote by $(\mathbb{D}(N)\otimes End(V_\tau))^K$ the algebra of  differential operators on $V_\tau$-valued functions on $N$, which are $N$-invariant and commute with the action of $K$ given in (\ref{K-action}). Then, $\mathbb{D}(E_\tau)$ and $(\mathbb{D}(N)\otimes End(V_\tau))^K$ are isomorphic. Moreover, $(K\ltimes N,K,\tau)$ is a commutative triple if and only if $(\mathbb{D}(N)\otimes End(V_\tau))^K$ is commutative. For a reference see \cite{Fulvio} Theorem 7.1.
\\ \\
From now on, the symbol $I$ will denote the identity matrix between finite dimensional vector spaces with an appropriate dimension in each case.
If $(K\ltimes N,K, \tau)$ is a commutative triple and $\Phi\in L^{\infty}_{\tau}(N,End(V_\tau))$, it holds (\cite{Fulvio} Corollary 7.2) that the following are equivalent:
\begin{itemize}\label{eigen}
\item[(i)] $\Phi$ is a $\tau$-spherical function;
\item[(ii)] $\Phi(0)=I$ and $\Phi$ is a joint eigenfunction for all $D\in (\mathbb{D}(N)\otimes End(V_\tau))^K$.
\end{itemize}
Let $\frak{N}$ be the Lie algebra of $N$.
There exists a linear isomorphism called
symmetrization operator (described in \cite{H} page 280, Theorem 4.3, for the scalar case, and in \cite{Fulvio} section 2, for our case)  between  $(\mathbb{D}(N)\otimes End(V_\tau))^K$ and the space $\mathcal{P}_\tau$ of $End(V_\tau)$-valued polynomials on $\frak{N}$  that satisfy $P(k\cdot x)=\tau(k)P(x)\tau(k)^{-1}$ for all $k\in K$ and $x\in \frak{N}$. 
\\ \\
When $N=\R^n$, the maximal compact and connected subgroup of automorphisms of $\R^n$ is, up to conjugation, the special orthogonal group $SO(n)$. Let $K$ be a closed subgroup of $SO(n)$ acting naturally on $\R^n$, and $(\tau, V_\tau)$ be an irreducible unitary representation of $K$. Theorem 10.1 in \cite{Fulvio} gives a simplification of Godement's Criterion for $(K\ltimes\R^n,K,\tau)$ to be a commutative triple: 
let $K_x=\{k\in K / \ k\cdot x=x \}$ be the stabilizer of $x\in \R^n$, then $(K\ltimes\R^n,K,\tau)$ is a commutative triple if and only if, for every $x\in \R^n$, the action of $K_x$  on $V_\tau$ is multiplicity free. 
\\ \\
Let $(K\ltimes \R^n,K,\tau)$ be a commutative triple. F. Ricci and A. Samanta describe (\cite{Fulvio} section 11) an integral formula for the $\tau$-spherical functions. 
For fixed $\xi\in\R^n$, let $V_\tau=\oplus_{j=1}^{l(\xi)}V_{j,\xi}$ be the multiplicity free decomposition of $V_\tau$ under the action of $K_\xi$. Let $P_{j}(\xi)$ denote the projection from $V_\tau$ onto $V_{j,\xi}$ with respect to this decomposition, and $d_j$ the dimension of $V_{j,\xi}$. Defining
\begin{equation}\label{formula integral de fulvio}
\Phi_{(\xi,j)}(x):=\frac{d_\tau}{d_j}\int_K e^{-i<\xi,k\cdot x>}\tau(k^{-1})P_j(\xi)\tau(k) \ dk , 
 \end{equation}
where $<\cdot,\cdot>$ denotes the usual inner product in $\R^n$, it holds that $\Phi$ is  a $\tau$-spherical function.
Moreover, $\{\Phi_{(\xi,j)}:\xi\in\R^n, j=1,2,...,l(\xi)\}$ is a complete set of $\tau$-spherical functions and two 
$\tau$-spherical functions $\Phi_{(\xi_1,j_1)}$, $\Phi_{(\xi_2,j_2)}$  coincide if and only if $\xi_1$ and $\xi_2$ lie on the same $K$-orbit and, if $\xi_2=k\cdot\xi_1$, $V_{j_2,\xi_2}=\tau(k)V_{j_1,\xi_1}$    (\cite{Fulvio} Theorem 11.1).
\\ \\
It is proved in \cite{Yakimova} that $(SO(n)\ltimes\R^n, SO(n))$ is a strong Gelfand pair.
In this work we develop in greater detail the spherical analysis for the $3$-dimensional euclidean motion group, $SO(3)\ltimes \R^3$,  for an arbitrary irreducible unitary representation of the rotation group $SO(3)$. In section 12 of \cite{Fulvio}  they computed explicitly the spherical functions when $\tau$ is the natural action. In this paper we complete the theory for this strong Gelfand pair giving the explicit form of spherical function for every representation $\tau\in\widehat{SO(3)}$. 
\\ \\
The problem of giving explicit calculations of the matrix spherical functions in concrete examples of commutative triples is developed in several  articles such as \cite{Camporesi}, \cite{GPT}, \cite{PTZ} and \cite{RT}. All these articles study semisimple groups. This work, like \cite{Fulvio}, treats the case of a semidirect product of a compact with a nilpotent group.
\\  \\
Given an arbitrary irreducible unitary representation $(\tau,V_\tau)$ of $SO(3)$, our main results focus on giving an explicit description of a set of generators of $(\mathbb{D}(\R^3)\otimes End(V_\tau))^{SO(3)}$, an explicit computation of the set of $\tau$-spherical functions in three different ways, 
and  an explicit form of the $\tau$-spherical Fourier transform. We also give an inversion formula and the Plancherel measure associated to the triple $(SO(3)\ltimes \R^3,SO(3),\tau)$. Summing up, our work is condensed into five theorems: 
\begin{itemize} 
\item[-] Theorem 1 characterizes certain invariant matrix differential operators. It will be proved in section 2.
\item[-] Theorems 2, 3 and 4 show different realizations of matrix spherical functions. They will be proved in section 3.
\item[-] Finally, Theorem 5, in section 4, is the Inversion Theorem.
\end{itemize}
In order to enunciate them we introduce some concepts and notation.
\\ \\
The representations of $SO(3)$ are well known and for each non-negative integer $m$ there is an irreducible unitary representation $\tau_m$ of $(2m+1)$-dimension on $V_{\tau_m}=H_m$, the space of harmonic homogeneous polynomials of degree $m$ on $\R^3$, given by $(\tau_m(k)p)(x):=p(k^{-1}\cdot x)$. Any irreducible unitary representation of $SO(3)$ is equivalent to one of these. 
\\ \\
The $3$-dimensional case is singled out since    $\R^3$ is isomorphic as vector space to $so(3)$ (the Lie algebra of $SO(3)$) and the natural representation of $SO(3)$ on $\R^3$ is equivalent to the adjoint representation of $SO(3)$ on $so(3)$. 
Given  an irreducible unitary representation $\tau$ of $SO(3)$, we call  $d\tau$ the corresponding representation on $so(3)$. Then, $d\tau$ can be considered to be a matrix-valued polynomial on $\R^3$ and we denote by $D_\tau$ the correspondent differential operator  via the symmetrization map. 
\\ \\
From now on we fix $\tau=\tau_m\in \widehat{SO(3)}$. In this work we are going to see that, for a fixed non-negative integer $m$, the set of $End(V_{\tau_m})$-valued polynomials on $\R^3$ satisfying (\ref{Kinv}) is a $(2m+1)$-dimensional $\C[|x|^2]$-module. We are going to prove, using representation theory, that, for each $0\leq j \leq2m$,  there is essentially one square $(2m+1)\times (2m+1)$ matrix $Q_j$ such that its entries are harmonic homogeneous polynomials on $\R^3$ of degree $j$ and such that it is $SO(3)$-invariant according to (\ref{Kinv}). Then, $\mathcal{P}_{\tau_m}$ is generated as $\C[|x|^2]$-module by the finite set $\{Q_j\}_{j=0}^{2m}$. This fact is going to be used  in section \ref{sec:1} to prove: 
\begin{theorem}
The algebra $(\mathbb{D}(\R^3)\otimes End(V_{\tau}))^{SO(3)}$ is generated by $\Delta \otimes I$ and $D_\tau$, where $\Delta$  denotes the Laplacian operator on $\R^3$.  
\end{theorem}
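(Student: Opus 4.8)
The plan is to use the symmetrization isomorphism between $(\mathbb{D}(\R^3)\otimes End(V_\tau))^{SO(3)}$ and the module $\mathcal{P}_{\tau_m}$ of $End(V_{\tau_m})$-valued polynomials on $\R^3$ satisfying (\ref{Kinv}), reducing the theorem to a purely algebraic statement about generators of $\mathcal{P}_{\tau_m}$ as an algebra. Since symmetrization is a linear (not algebra) isomorphism, one must be slightly careful: it sends $\Delta\otimes I$ to the polynomial $|x|^2 I$ and $D_\tau$ to the polynomial $d\tau(x)$ (the representation $d\tau_m$ of $so(3)\cong\R^3$ viewed as a matrix-valued linear polynomial), but products of differential operators do not correspond to products of polynomials. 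So the argument will have two layers: first identify a convenient set of module/algebra generators of $\mathcal{P}_{\tau_m}$, then lift back.

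First I would establish the structure of $\mathcal{P}_{\tau_m}$ as announced in the excerpt: it is a free $\C[|x|^2]$-module with basis $\{Q_0,Q_1,\dots,Q_{2m}\}$, where the entries of $Q_j$ are harmonic homogeneous of degree $j$. The representation-theoretic input is the decomposition, under the $SO(3)$-action $k\mapsto (P\mapsto \tau(k)P(k^{-1}\cdot)\tau(k)^{-1})$, of $End(V_{\tau_m})\cong V_{\tau_m}\otimes V_{\tau_m}^* \cong \bigoplus_{j=0}^{2m} H_j$ (Clebsch–Gordan for $SO(3)$), each summand with multiplicity one; tensoring with the harmonic polynomials $H_j$ on $\R^3$ and counting $SO(3)$-invariants via Schur's lemma shows there is, up to scalar, exactly one invariant matrix whose entries are harmonic of degree $j$, namely $Q_j$ built from the corresponding copy of $H_j$ inside $End(V_{\tau_m})$. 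Then I would show that $d\tau(x)$, which lives in the $j=1$ component (since $so(3)\cong H_1$), is a nonzero multiple of $Q_1$; and, crucially, that the products $Q_1^j$ — computed in the matrix algebra $End(V_{\tau_m})$ — have a nonvanishing degree-$j$ harmonic leading component, hence $\{I=Q_0, Q_1, Q_1^2,\dots,Q_1^{2m}\}$ (modulo lower-order corrections by $|x|^2$ times lower $Q_i$'s) is again a $\C[|x|^2]$-basis. This is the place where I expect the real work to be: ruling out the degeneration $Q_1^j \in |x|^2\cdot\mathcal P_{\tau_m}^{<j}$ for $j\le 2m$. It should follow from the fact that $d\tau_m$ is a faithful $(2m+1)$-dimensional representation of $so(3)$, so the minimal polynomial of $d\tau_m(x)$ (for generic $x$) has degree $2m+1$; equivalently the $2m+1$ eigenvalues of $d\tau_m(x)$ are $\{-imt,\dots,-it,0,it,\dots,imt\}$ with $t=|x|$, which are distinct, so $I, d\tau(x),\dots,d\tau(x)^{2m}$ are linearly independent over $\C$ for fixed generic $x$, forcing linear independence over $\C[|x|^2]$ after stripping the appropriate powers of $|x|^2$.

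Granting that, $\mathcal{P}_{\tau_m}$ is generated as a $\C$-algebra by $|x|^2 I$ and $d\tau(x)$: indeed any $P\in\mathcal P_{\tau_m}$ is a $\C[|x|^2]$-combination of the $Q_j$, and each $Q_j$ is a $\C[|x|^2]$-combination of $I, d\tau(x),\dots, d\tau(x)^j$ by the previous paragraph (upper-triangular change of basis). Now transport back through symmetrization. I would argue that the subalgebra $\mathcal A\subseteq (\mathbb{D}(\R^3)\otimes End(V_\tau))^{SO(3)}$ generated by $\Delta\otimes I$ and $D_\tau$ has symmetrized image containing all of $\mathcal P_{\tau_m}$: the symmetrization of $D_\tau^j$ is $d\tau(x)^j$ plus a polynomial of strictly lower degree (symmetrization preserves top-degree symbols), and symmetrization of $(\Delta\otimes I)^a D_\tau^j$ has top symbol $|x|^{2a} d\tau(x)^j$; by an induction on degree, these top symbols span the same graded space as the $Q_j$ and their $|x|^2$-multiples, so $\mathcal A$ surjects onto $\mathcal P_{\tau_m}$ under symmetrization, hence $\mathcal A = (\mathbb{D}(\R^3)\otimes End(V_\tau))^{SO(3)}$ since symmetrization is a bijection. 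The main obstacle, to reiterate, is the combinatorial/representation-theoretic step showing the powers of $d\tau_m(x)$ exhaust the module — i.e. that no relation of the form $d\tau(x)^{2m} = |x|^2\cdot(\text{lower powers})$ can be improved to eliminate a generator — which is exactly controlled by the spectrum of the spin-$m$ generator and the dimension count $\dim_{\C[|x|^2]}\mathcal P_{\tau_m} = 2m+1$.
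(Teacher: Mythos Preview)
Your proposal is correct and follows essentially the same strategy as the paper: reduce via symmetrization to showing that $\mathcal{P}_{\tau_m}$ is generated as a $\C[|x|^2]$-module by the powers $Q_1^0,\dots,Q_1^{2m}$, and establish this using the fact that $d\tau_m(e_1)$ has $2m+1$ distinct eigenvalues so that its minimal polynomial has degree $2m+1$. Two remarks on the differences.

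First, your concern that symmetrization is only a linear isomorphism is misplaced in this particular case. Since $N=\R^3$ is abelian, the map $P(x)\mapsto P(\partial_x)$ (with matrix coefficients in $End(V_\tau)$) \emph{is} an algebra isomorphism: the $\partial_i$ commute with one another and with constant matrices, so $(\sum_\alpha c_\alpha x^\alpha)(\sum_\beta d_\beta x^\beta)\mapsto(\sum_\alpha c_\alpha \partial^\alpha)(\sum_\beta d_\beta \partial^\beta)$ multiplicatively. This is why the paper can simply say Theorem~1 ``follows directly'' from Proposition~\ref{pol en Q1}; your top-symbol induction is valid but unnecessary here.

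Second, the paper's route to the module-generation statement (its Proposition~\ref{pol en Q1}) is more constructive than yours. Rather than arguing abstractly that linear independence of $\{Q_1^j\}_{j=0}^{2m}$ over $\C[|x|^2]$ forces the triangular change of basis to be invertible, the paper proves in Lemma~\ref{lemma op dif}(iii) that $Q_1Q_j-\frac{r^2}{2j+1}D_\tau Q_j$ is harmonic, giving an explicit three-term recursion $Q_{j+1}=Q_1Q_j-\frac{r^2}{2j+1}a_jQ_{j-1}$; nonvanishing of the right-hand side is then checked at $x=e_1$ by the same minimal-polynomial argument you invoke. This recursion, and the constants $a_j$ it introduces, are used essentially in the later computation of the $\tau$-spherical functions (Theorem~2 and the Appendix), so the paper's more explicit approach pays off downstream; your argument proves Theorem~1 equally well but would not by itself supply that recursion.
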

Using the characterization of $\tau$-spherical functions as joint eigenfunctions of the operators in $(\mathbb{D}(\R^3)\otimes End(V_{\tau}))^{SO(3)}$ we will compute them in three different ways.
\\ \\
Our first result on $\tau$-spherical functions evidences their realization as linear combinations of scalar $K$-invariant functions times elementary $End(V_\tau)$-valued polynomials satisfying (\ref{Kinv}). In order to enunciate it properly we need to introduce the following elements whose proofs can be found in section \ref{sec:2.1}.
\\ \\
For each integer $0\leq j \leq 2m$, let $\{f_j^s\}_{s\in\R_{>0}}$ be the set of radial functions 
\begin{equation*}
f_j^s(r):=\Gamma(\frac{3}{2}+j)\frac{J_{j+\frac{1}{2}}(sr)}{{(\frac{sr}{2})}^{j+\frac{1}{2}}},
\end{equation*}
 where $J_\alpha$ denotes the Bessel function of the first kind of order $\alpha$. This are bounded eigenfunctions of the Lapacian operator  with eigenvalue $-s^2$. For a fix $s\in\R_{>0}$ consider the vector space $V^s$ generated by $\mathcal{B}^s:=\{f_j^sQ_j\}_{j=0}^{2m}$.  We will prove the following two statements that will allow us to demonstrate the result about $\tau$-spherical functions:
\begin{itemize}
\item[(i)] $V^s$ is invariant under $D_\tau$ and
\item[(ii)] $D_\tau$ diagonalizes without multiple eigenvalues with respect to $\mathcal{B}^s$.
\end{itemize}
Let the $(2m+1)$-truples $v^{(s,k)}=(1,v_1^{(s,k)},...,v_{2m}^{(s,k)})\in\C^{2m+1}$ denote the eigenvectors of $D_\tau$ on $V^s$ in the basis $\mathcal{B}^s$.
Before enunciating the theorem we emphasise that the following relations hold:
\begin{itemize}
\item[(i)] $f_j^s(r)=f_j^1(sr)$ for all $0\leq j\leq 2m$ and
\item[(ii)] $v^{(s,k)}_j={s^j}{v^{(1,k)}_j}$ for all the coordinates $0\leq j\leq 2m$.
\end{itemize}  
\begin{theorem}
Each pair $(s,k)\in \R_{>0}\times \{0,1,...,2m\}$ indexes a $\tau_m$-spherical function $\Phi_{s,k}$ given by 
\begin{equation*}
 \Phi_{s,k}(x)=f_0^{s}(|x|)I+v_1^{(s,k)}f_1^{s}(|x|)Q_1(x)+...+v_{2m}^{(s,k)}f_{2m}^{s}(|x|)Q_{2m}(x). 
\end{equation*} 
Moreover, every $\tau_m$-spherical function is either of this form or the identity map on $V_{\tau_m}$.
\end{theorem}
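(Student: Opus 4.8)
The plan is to reduce the statement, via Theorem 1, to the intrinsic characterization of $\tau$-spherical functions. By Corollary 7.2 of \cite{Fulvio}, a function $\Phi\in L^{\infty}_{\tau}(\R^3)$ is $\tau_m$-spherical exactly when $\Phi(0)=I$ and $\Phi$ is a joint eigenfunction of every operator in $(\mathbb{D}(\R^3)\otimes End(V_{\tau_m}))^{SO(3)}$; since by Theorem 1 this algebra is generated by $\Delta\otimes I$ and $D_\tau$, it suffices that $\Phi(0)=I$ and that $\Phi$ be an eigenfunction of the two generators $\Delta\otimes I$ and $D_\tau$. So the proof splits into producing, for each $(s,k)$, a function meeting these three requirements, and then showing nothing else (besides the identity) can meet them.

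For the first assertion I would verify the three conditions directly for the proposed $\Phi_{s,k}=\sum_{j=0}^{2m}v_j^{(s,k)}f_j^sQ_j$. Membership in $L^{\infty}_{\tau}(\R^3)$ follows from the large-argument asymptotics of the Bessel functions: $f_j^s(r)$ decays like $r^{-j-1}$, so $f_j^s(|x|)Q_j(x)=O(|x|^{-1})$ because $Q_j$ is homogeneous of degree $j$. The radial factors $f_j^s$ are chosen precisely so that $f_j^s(|x|)Q_j(x)$ is an eigenfunction of $\Delta\otimes I$ with eigenvalue $-s^2$: for a harmonic homogeneous polynomial $h$ of degree $j$ on $\R^3$ and a radial $g$ one has $\Delta(g(r)h)=\left(g''+\tfrac{2+2j}{r}g'\right)h$, and $f_j^s$ solves the corresponding Bessel-type ODE; hence every element of $V^s$, in particular $\Phi_{s,k}$, is a $(-s^2)$-eigenfunction of $\Delta\otimes I$. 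By properties (i) and (ii) of $V^s$, $D_\tau$ preserves $V^s$ and is diagonalized by $\mathcal{B}^s$ without multiple eigenvalues, so $\Phi_{s,k}$ — being the combination attached to the eigenvector $v^{(s,k)}$ — is a $D_\tau$-eigenfunction. Finally $\Phi_{s,k}(0)=I$ since $Q_j(0)=0$ for $j\geq1$, $Q_0=I$, $v_0^{(s,k)}=1$ and $f_0^s(0)=1$. Thus each $\Phi_{s,k}$ is $\tau_m$-spherical.

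For the converse I would compare the family $\{\Phi_{s,k}\}\cup\{I\}$ with the complete list of $\tau_m$-spherical functions described by Theorem 11.1 of \cite{Fulvio}. The $SO(3)$-orbits in $\R^3$ are $\{0\}$ and the spheres of radius $s>0$. Over $\{0\}$ the stabilizer is all of $SO(3)$ and $\tau_m$ is irreducible, so $l(0)=1$ and the associated spherical function is $\int_{SO(3)}I\,dk=I$. Over a sphere of radius $s>0$ the stabilizer is an $SO(2)$, and $\tau_m|_{SO(2)}$ splits into $2m+1$ distinct one-dimensional weight spaces, hence is multiplicity free with $l(\xi)=2m+1$. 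So Theorem 11.1 yields exactly one spherical function over $\{0\}$, namely $I$, and exactly $2m+1$ over each sphere. On the other hand the $\Phi_{s,k}$ are pairwise distinct: for fixed $s$ they are eigenfunctions of $D_\tau$ for the $2m+1$ distinct eigenvalues given by (ii); for different values of $s$ they have different $\Delta\otimes I$-eigenvalues $-s^2$; and none equals $I$, whose $\Delta$-eigenvalue is $0$. Therefore, for each $s>0$, the $2m+1$ functions $\Phi_{s,k}$ already exhaust the spherical functions attached to that orbit, and together with $I$ they exhaust the whole list, which proves the second assertion.

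The main obstacle is the completeness half: one must correctly identify the orbit structure of $SO(3)$ on $\R^3$ and compute $l(\xi)$ for each orbit (the branching $\tau_m|_{SO(2)}$), and then be certain the constructed $\Phi_{s,k}$ are genuinely distinct so that the counts match. If one prefers to avoid invoking Theorem 11.1, the converse can instead be argued intrinsically: a $\tau_m$-spherical function $\Phi$ is a bounded $\Delta$-eigenfunction, so by a Fourier-support argument its eigenvalue is $-s^2$ with $s\geq0$; if $s=0$, Liouville's theorem forces $\Phi$ to be constant and Schur's lemma forces $\Phi=I$; if $s>0$, decomposing $End(V_{\tau_m})\cong\bigoplus_{l=0}^{2m}H_l$ under $SO(3)$ and separating variables shows that the bounded $(-s^2)$-eigenfunctions of $\Delta\otimes I$ satisfying (\ref{Kinv}) are exactly $V^s$, so $\Phi\in V^s$, and being a $D_\tau$-eigenfunction on $V^s$ with $\Phi(0)=I$ it must coincide with some $\Phi_{s,k}$.
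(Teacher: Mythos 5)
Your proposal is correct and follows essentially the same route as the paper: both build $\Phi_{s,k}$ from the eigenvectors of $D_\tau$ acting on $V^s$ (using Theorem 1 together with the joint-eigenfunction characterization from \cite{Fulvio}, Corollary 7.2, and the separation-of-variables ODE that produces the Bessel factors $f_j^s$), and both settle the ``moreover'' part by matching against the parametrization of the complete set of $\tau$-spherical functions in \cite{Fulvio}, Theorem 11.1. Your write-up is in fact somewhat more explicit than the paper on the completeness count (orbit structure, the multiplicity-free branching $\tau_m|_{SO(2)}$, and pairwise distinctness via the eigenvalues of $\Delta\otimes I$ and $D_\tau$), while taking as given the facts (i)--(ii) about $V^s$ that the paper establishes through the Bessel recurrence and differential relations and the tridiagonal matrix of $D_\tau$.
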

Our second result on $\tau$-spherical functions represents them as Fourier transforms of projection-valued measures on $K$-orbits. This is a consequence of the integral formula (\ref{formula integral de fulvio}) making a reduction of it. We require some previous notions to state it as a theorem and its demonstration is developed in section \ref{sec:2.2}. 
\\  \\ 
Let $\xi\in S^2$. The action of $K_\xi$  on $V_{\tau_m}$  decomposes into $2m+1$ 
one-dimensional spaces $\{V_{j,\xi}\}_{j=-m}^m$,  each corresponding to a different eigenvalue. For each $j$, let $P_j(\xi)$ be the orthogonal projection from $V_{\tau_m}$ to $V_{j,\xi}$. 
\begin{theorem} 
The non-trivial $\tau_m$-spherical functions are parametrized by $s\in\R_{>0}$ and $j\in\mathbb{Z}$, $-m\leq j \leq m$, and have the following form
\begin{equation*}
\Phi_{s,j}=(2m+1)\widehat{P_{j}(\frac{.}{s})\sigma_s},
\end{equation*}
where $ \ \widehat{\cdot} \ $ denotes the classical Fourier transform, $\sigma$ is the rotation invariant measure of the sphere on $\R^3$ and the projections $P_j$ are explicitly  
\begin{equation*}
P_j(\xi)=\prod_{l\neq j ; \\  l=-m}^{m}\frac{\sqrt{-1}d\tau_m(\xi)+lI}{l-j}.
\end{equation*}
Moreover, $-s^2$ and $sj$ are the eigenvalues of  $\Phi_{s,j}$ with respect to $\Delta \otimes I$ and $D_\tau$, respectively.  
\end{theorem}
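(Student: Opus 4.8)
The plan is to start from the general integral formula \eqref{formula integral de fulvio} of Ricci--Samanta specialized to $N=\R^3$, $K=SO(3)$, $\tau=\tau_m$. By Theorem 11.1 of \cite{Fulvio}, every non-trivial $\tau_m$-spherical function equals some $\Phi_{(\xi,j)}$, and two of them coincide exactly when the parameters lie on the same $K$-orbit (with matching isotypic pieces). Since $SO(3)$ acts transitively on spheres in $\R^3$, each $K$-orbit is determined by $s=|\xi|$, so we may take $\xi=s e$ for a fixed unit vector $e$; the remaining discrete index then ranges over the $K_\xi$-isotypic components of $V_{\tau_m}$. The first step is therefore to identify $K_\xi$: for $\xi$ a nonzero vector, $K_\xi\cong SO(2)$, and the restriction of $\tau_m$ to this circle is the sum of the $2m+1$ distinct characters of weights $j\in\{-m,\dots,m\}$ — this is the standard weight decomposition of the spin-$m$ representation, and it shows $l(\xi)=2m+1$ with every $d_j=1$, so the normalization factor $d_\tau/d_j$ in \eqref{formula integral de fulvio} becomes exactly $2m+1$.

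Next I would rewrite the integral $\int_K e^{-i\langle\xi,k\cdot x\rangle}\,\tau(k^{-1})P_j(\xi)\tau(k)\,dk$. Changing variables $k\mapsto k^{-1}$ and using $\langle\xi,k^{-1}\cdot x\rangle=\langle k\cdot\xi,x\rangle$ turns the exponential into $e^{-i\langle k\cdot\xi,x\rangle}$, while the conjugation becomes $\tau(k)P_j(\xi)\tau(k^{-1})=\tau(k)P_j(\xi)\tau(k)^{-1}$. The equivariance $\tau(k)P_j(\xi)\tau(k)^{-1}=P_j(k\cdot\xi)$ (projections transform covariantly) lets me push everything onto the orbit: the integral over $K$ becomes an integral over the sphere $s\,S^2$ against the pushforward of Haar measure, which is the rotation-invariant probability measure. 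Recognizing $\int_{S^2} e^{-i\langle s\eta,x\rangle}P_j(s\eta)\,d\sigma(\eta)$ (suitably normalized) as the classical Fourier transform of the measure $P_j(\tfrac{\cdot}{s})\sigma_s$, where $\sigma_s$ is the sphere of radius $s$, gives the asserted formula $\Phi_{s,j}=(2m+1)\,\widehat{P_j(\tfrac{\cdot}{s})\sigma_s}$. The small care needed is bookkeeping the normalization of $\sigma$ versus Haar measure on $K$ and the dilation $\xi\mapsto\xi/s$.

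For the explicit Lagrange-interpolation formula for $P_j(\xi)$, I would use that $\sqrt{-1}\,d\tau_m(\xi)$, for $\xi$ a unit vector, is a self-adjoint operator whose spectrum is exactly $\{-m,\dots,m\}$, each eigenvalue simple — indeed $d\tau_m(\xi)$ is (up to the factor $i$) the generator of the one-parameter group $\tau_m(\exp(t\xi))=\tau_m$ of rotations about the axis $\xi$, whose eigenvalues are $\{ij\}_{j=-m}^m$. Since the eigenspaces are precisely the $V_{j,\xi}$, the spectral projection onto the eigenvalue $j$ is given by the Lagrange interpolation polynomial $P_j(\xi)=\prod_{l\ne j}\frac{\sqrt{-1}\,d\tau_m(\xi)+lI}{l-j}$, which is exactly the claimed expression. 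Finally, for the eigenvalues: $\Phi_{s,j}$ being a Fourier transform of a measure supported on the sphere of radius $s$ makes it an eigenfunction of $\Delta\otimes I$ with eigenvalue $-s^2$ (each scalar component $e^{-i\langle\xi,x\rangle}$ with $|\xi|=s$ satisfies $\Delta e^{-i\langle\xi,\cdot\rangle}=-s^2 e^{-i\langle\xi,\cdot\rangle}$); and the eigenvalue of $D_\tau$ is computed by evaluating the differential operator at the identity using the symmetrization correspondence $D_\tau\leftrightarrow d\tau_m$, which on $\Phi_{s,j}$ reads off the weight $j$ scaled by $s$, giving $sj$. The main obstacle I anticipate is not any single hard estimate but getting the interlocking normalizations consistent — the factor $d_\tau/d_j$, the passage from Haar measure on $SO(3)$ to $\sigma$, the dilation by $s$, and the sign/imaginary-unit conventions in $d\tau_m$ — so that all three displayed formulas (the Fourier-of-measure form, the interpolation formula, and the eigenvalues $-s^2$ and $sj$) come out with precisely the constants stated; this is best handled by checking the case $m=1$ (the natural representation, already done in \cite{Fulvio} section 12) as a consistency test.
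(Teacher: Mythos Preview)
Your proposal is correct and follows essentially the same route as the paper: both specialize the Ricci--Samanta integral formula \eqref{formula integral de fulvio} to $\xi=se_1$, identify $K_\xi\cong SO(2)$ with its multiplicity-free weight decomposition $\{-m,\dots,m\}$ (so $d_j=1$ and the front constant is $2m+1$), use the covariance $\tau_m(k)P_j(e_1)\tau_m(k)^{-1}=P_j(k\cdot e_1)$ to pass from the integral over $SO(3)$ to one over $S^2$, obtain the Lagrange interpolation expression for $P_j$ from the known spectrum $\{ij\}$ of $d\tau_m(\xi)$, and read off the eigenvalues $-s^2$ and $sj$ by differentiating under the integral. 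The only cosmetic difference is that the paper verifies $\Phi_{s,j}(0)=I$ directly via the Schur orthogonality relations (and thereby re-confirms sphericity through the joint-eigenfunction characterization), whereas you invoke Theorem~11.1 of \cite{Fulvio} to guarantee this a priori; also, your description of the $D_\tau$-eigenvalue via ``symmetrization correspondence'' is really the one-line computation $D_\tau e^{-is\langle x,\xi\rangle}=-is\,d\tau_m(\xi)e^{-is\langle x,\xi\rangle}$ followed by $d\tau_m(\xi)P_j(\xi)=ij\,P_j(\xi)$, exactly as the paper writes it out.
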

Finally, we will realize a $\tau$-spherical function as a matrix-valued function whose entries are derivatives of a scalar-valued spherical function. For a fix $s\in\R_{>0}$, let $\varphi_s$ be the scalar spherical function associated to the Gelfand pair $(SO(3)\ltimes \R^3, SO(3))$ with eigenvalue $-s^2$ (with respect to the Laplacian operator). Those derivatives are provided by the following matrix differential operators in $(\mathbb{D}(\R^3)\otimes End(V_{\tau_m}))^{SO(3)}$ 
\begin{equation*}
 D_{s,j}:=\prod_{l\neq j, \ l=-m}^{m}\frac{D_\tau-slI}{sj-sl}.
\end{equation*}
where the parameter $s$ runs over $\R_{>0}$ and $j$ is an integer between $-m\leq j\leq m$.
\begin{theorem} 
All the non-trivial $\tau_m$-spherical functions can be obtained as
\begin{equation*}
\Phi_{s,j}=(2m+1)D_{s,j}(\varphi_s I).
\end{equation*}
\end{theorem}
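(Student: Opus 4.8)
The plan is to identify $D_{s,j}$, restricted to the finite-dimensional space $V^s$ appearing before Theorem 2, with the spectral projection associated to the $D_\tau$-eigenvalue $sj$, to apply it to the distinguished vector $\varphi_s I\in V^s$, and to fix the constant $2m+1$ using the completeness relation $\sum_{j=-m}^{m}P_j(\xi)=I$ together with the integral formula of Theorem 3. First I would check that $\varphi_s I$ lies in $V^s$: by Schur's lemma the only constant $End(V_{\tau_m})$-valued polynomial satisfying (\ref{Kinv}) is a scalar, so $Q_0=I$; and from $J_{1/2}(z)=\sqrt{2/(\pi z)}\,\sin z$ one gets $f_0^s(r)=\sin(sr)/(sr)$, which is precisely the scalar spherical function $\varphi_s$ of the Gelfand pair $(SO(3)\ltimes\R^3,SO(3))$ attached to the Laplacian eigenvalue $-s^2$ and, with the normalization of Theorem 3, equals $\widehat{\sigma_s}$. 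Hence $\varphi_s I=f_0^s(|x|)Q_0\in\mathcal{B}^s\subset V^s$.

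Next I would invoke the facts underlying Theorem 2 --- that $V^s$ is $D_\tau$-invariant and that $D_\tau|_{V^s}$ is diagonalizable with $2m+1$ distinct eigenvalues --- together with Theorem 3, which identifies those eigenvalues as $\{sj:-m\le j\le m\}$, the $sj$-eigenline being spanned by $\Phi_{s,j}$ normalized so that $\Phi_{s,j}(0)=I$. Thus $\{\Phi_{s,j}\}_{j=-m}^{m}$ is a basis of $V^s$. Since $D_{s,j}=\prod_{l\ne j}(D_\tau-slI)/(sj-sl)$ is the Lagrange interpolation polynomial in $D_\tau$ equal to $1$ at $sj$ and $0$ at the other eigenvalues, its restriction to $V^s$ is exactly the projection onto the $sj$-eigenspace along the others; writing $\varphi_s I=\sum_{j}c_j\Phi_{s,j}$ we obtain $D_{s,j}(\varphi_s I)=c_j\Phi_{s,j}$ (in particular a joint eigenfunction of $\Delta\otimes I$ and $D_\tau$ with eigenvalues $-s^2$ and $sj$, matching Theorem 3). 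To evaluate $c_j$, sum the formula of Theorem 3 over $j$ and use $\sum_{j}P_j(\xi)=I$: this yields $\sum_{j}\Phi_{s,j}=(2m+1)\widehat{I\sigma_s}=(2m+1)\widehat{\sigma_s}\,I=(2m+1)\varphi_s I$, so comparison with $\varphi_s I=\sum_j c_j\Phi_{s,j}$ and linear independence of the $\Phi_{s,j}$ give $c_j=1/(2m+1)$ for every $j$. Therefore $\Phi_{s,j}=(2m+1)D_{s,j}(\varphi_s I)$, and since Theorems 2 and 3 exhaust the non-trivial $\tau_m$-spherical functions, the claim follows.

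The main point requiring care is not analytic but bookkeeping: one must line up the spectral data of $D_\tau|_{V^s}$ with the integer parameter $-m\le j\le m$ consistently with Theorems 2 and 3, and pin down the scalar $2m+1$. Both are settled once the completeness relation $\sum_j P_j(\xi)=I$ and the identity $\widehat{\sigma_s}=\varphi_s$ are available, and then the normalization drops out with no further computation.
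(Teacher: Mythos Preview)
Your proof is correct, but it follows a genuinely different route from the paper's. The paper argues directly via Cayley--Hamilton and symmetrization: the characteristic polynomial of $Q_1(x)=d\tau_m(x)$ is $\lambda\prod_{l=1}^{m}(\lambda^2+l^2|x|^2)$, and under the symmetrization map $|x|^2\mapsto\Delta$, $Q_1\mapsto D_\tau$ this yields the operator identity $D_\tau\prod_{l=1}^{m}(D_\tau^2+l^2\Delta\otimes I)=0$. Applying it to $\varphi_s I$ and using $\Delta\varphi_s=-s^2\varphi_s$ gives $(D_\tau-sjI)\prod_{l\neq j}(D_\tau-slI)\varphi_s I=0$, so $D_{s,j}(\varphi_s I)$ is a $D_\tau$-eigenfunction with eigenvalue $sj$; that it is a $(\Delta\otimes I)$-eigenfunction and satisfies (\ref{Kinv}) is immediate. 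Theorems 2 and 3 are not used in the argument, and the normalization $\Phi_{s,j}(0)=I$ (equivalently, the factor $2m+1$) is left implicit.

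You instead stay inside the finite-dimensional $D_\tau$-module $V^s$ from section~\ref{sec:2.1}, identify $D_{s,j}|_{V^s}$ with the Lagrange spectral projector onto the $sj$-eigenline, and fix the constant by summing the integral formula of Theorem~3 together with $\sum_j P_j(\xi)=I$. What this buys you is a transparent and fully justified computation of the factor $2m+1$, something the paper's proof does not spell out; the price is that your argument leans on Theorems~2 and~3, whereas the paper's Cayley--Hamilton route is essentially self-contained.
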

The full proof of this theorem is given in section \ref{sec:2.3} and from a personal communication we know that F. Ricci and A. Samanta have proved that for a general group $G=K\ltimes N$ all the $\tau$-spherical functions can be obtained applying appropriate differential operators to the classical spherical functions associated to the respective Gelfand pair. 
\\ \\
In the last section, for each triple $(SO(3)\ltimes \R^3,SO(3),\tau)$, we obtain the Plancherel measure associated to it, and for any sufficiently good function $F\in L^1_\tau(N)$ we prove the following inversion formula
\begin{equation*}
F(x)=\sum_{j=-m}^{m}\int_0^{\infty}\mathcal{F}(F)(\Phi_{r,j}) \ \Phi_{r,j}(x) \ r^2 \ dr.
\end{equation*}

\section{Differential operators}\label{sec:1}
\label{sec:1}
The goal of this section is to describe explicitly a system of generators of the algebra $\mathbb{D}_{\tau_m}:=(\mathbb{D}(\R^3)\otimes End(V_{\tau_m}))^{SO(3)}$. We first study  the polynomial space  $\mathcal{P}_{\tau_m}$.
\\ \\
Let $\mathcal{P}(\R^3)$ be the space of scalar polynomials on three variables. 
We have the decomposition $\mathcal{P}(\R^3)\simeq \bigoplus_n \mathcal{P}_n(\R^3)$, where $\mathcal{P}_n(\R^3)$ denotes the space of homogeneous polynomials on $\R^3$ of degree $n$. Each one of these spaces can be naturally decomposed as $\mathcal{P}_n(\R^3)\simeq\bigoplus_k|x|^{2k}H_{n-2k}$. This follows from the fact that 
 the Laplacian operator $\Delta$ from $\mathcal{P}_n(\R^3)$ to $\mathcal{P}_{n-2}(\R^3)$ is suryective and since the operator $|x|^2\Delta$, from $\mathcal{P}_n(\R^3)$ to its image, is hermitian adjoint with respect to the inner product on $\mathcal{P}_n(\R^3)$ (defined by $<P,Q>:=P(\frac{\partial}{\partial x})_{|_{x=0}}\bar{Q}$  for all $P,Q\in \mathcal{P}_n(\R^3)$) and its kernel is exactly the kernel of the Laplacian (i.e., $H_n$). 
\\ \\
Considering $\mathcal{P}_{\tau_m}$  as a
$\C[|x|^2]$-module, the next step consists in studying, for any non-negative integer $j$, the elements of $H_{j}\otimes End(V_{\tau_m})$ that are $SO(3)$-invariant for the action given in (\ref{Kinv}).
\\ \\
The use of the representation theory of $SO(3)$ is crucial in this part. It is well known that $End(V_\tau)\simeq (V_\tau)\otimes (V_\tau)^{*}$, that $\tau_m$ is equivalent to its contragredient representation [see \cite{Faraut} page 112 and 113] and that $\tau_p\otimes\tau_q=\bigoplus_{k=|p-q|}^{p+q}\tau_{k}$ [see \cite{Faraut} page 151]. Then,
\begin{equation}
\tau_j\otimes (\tau_m\otimes\tau_m)=\tau_j\otimes \left[\bigoplus_{k=0}^{2m}\tau_k \right]=\bigoplus_{k=0}^{2m}\left[\bigoplus_{l=|j-k|}^{j+k}\tau_l\right],
\end{equation} 
and thus the trivial representation appears only once for each $0\leq j \leq 2m$ and does not appear for $j>2m$. 
\\ \\
 Therefore, $\mathcal{P}_{\tau_m}\simeq\C(|x|^2)\bigotimes\sum_{j=0}^{2m}({H_j\otimes H_j})^{SO(3)}$
where $({H_j\otimes H_j})^{SO(3)}$ is the one-dimensional vector space generated by a $(2m+1)$-dimensional square matrix $Q_j$ whose entries are harmonic homogeneous polynomials  of degree $j$ and satisfies  ($\ref{Kinv}$). We emphasize  that polynomials $Q_j$ depend on $\tau_m$. Finally, every matrix-valued polynomial on three variables satisfying ($\ref{Kinv}$) can be written as 
\begin{equation*}
p_0(|x|^2)I+p_1(|x|^2)Q_1+...+p_{2m}(|x|^2)Q_{2m} \ \ \text{ where } p_j \text{ are scalar polynomials.}
\end{equation*}
Now we are going to see what happens in the first two cases, when $m=0$ and when $m=1$, and later we move on to the general case. 
\\ \\ 
When $m=0$, i.e., when $\tau_0$ is the trivial representation of $SO(3)$, 
$\mathcal{P}_{\tau_0}=\C[|x|^2]$ 
and 
the algebra $\mathbb{D}_{\tau_0}$ (left invariant differential operators on $\R^3$ that commute with rotations) is generated by the Laplacian operator.
\\ \\
In the case $m=1$, i.e, when $\tau_1$ is the natural 
representation of $SO(3)$ on $\C^3$,  $\mathcal{P}_{\tau_1}$ is generated, as $\C[|x|^2]$-module, by $Q_0$, $Q_1$ and $Q_2$. Trivially, we can take $Q_0$ equal to the constant function $I$.
After that, we can take  
$Q_1(x):=d\tau_1(x)$ $\forall x\in\R^3$, where it must be interpreted via the identification between $\R^3$ and $so(3)$ given in the Introduction.  More explicitly, let 
$\left\{ \displaystyle{ Y_1 }=\begin{tiny}\begin{pmatrix}
  0 & 0 & 0 \\
  0 & 0 & 1 \\
  0 & -1& 0
 \end{pmatrix}\end{tiny}, Y_2=\begin{tiny}\begin{pmatrix}
  0 & 0 & -1 \\
  0 & 0 & 0 \\
  1 & 0& 0
 \end{pmatrix}\end{tiny}, Y_3=\begin{tiny}\begin{pmatrix}
  0 & 1 & 0 \\
  -1 & 0 & 0 \\
  0 & 0& 0
 \end{pmatrix} \end{tiny} \right\}$
 be the canonical basis of $so(3)$, then $d\tau_1(x):=\sum_{i=1}^3x_i d\tau_1(Y_i) $,  $ \ \forall \ x=(x_1,x_2,x_3)\in \R^3$. So $d\tau_1: \R^3 \rightarrow {so}(3)$,
\begin{small}
\begin{equation*}
d\tau_1 (x_1,x_2,x_3)=\begin{pmatrix}
  0 & x_3 & -x_2 \\
  -x_3 & 0 & x_1 \\
  x_2 & -x_1& 0
 \end{pmatrix}.
\end{equation*}
\end{small}
{In general}, $d\tau$ satisfies $\tau(k)d\tau(Y)\tau(k^{-1})=d\tau(Ad(k)Y)$ for all $Y\in so(3)$ and $k\in SO(3)$. 
As the adjoint representation  can be identified with the natural action of $SO(3)$ on $\R^3$,
then $Q_1$ satisfies ($\ref{Kinv}$). 
\\ \\
We still have to determine $Q_2$ explicitly. First of all note that $Q_1^2(x)=xx^t-|x|^2I$ $\forall x\in\R^3$, so it is a symmetric matrix whose components are homogeneous polynomials of degree two
and it satisfies ($\ref{Kinv}$) (because $Q_1$ does).  By its degree, $Q_1^2$ cannot be a $\C[|x|^2]$-multiple of $Q_1$ and then it can be written as $Q_1^2(x)=a|x|^2I+bQ_2(x)$ for some constants  $a$ and $b$. Since $Q_1^2$ is not a multiple of $|x|^2I$, then $b\neq 0$ and we can consider $b=1$.
Applying the Laplacian operator on both sides and using that $Q_2$ has harmonic components, we obtain $a=-2/3$, and thus 
$Q_2(x):=Q_1^2(x)+\frac{2}{3}|x|^2I$.
\\ \\
Therefore, $\mathcal{P}_{\tau_1}$ is generated as $\C[|x|^2]$-module by
\begin{small}
\begin{equation*}
\mathcal{P}_{\tau_1}=<\{I, Q_1= \begin{pmatrix}
  0 & x_3 & -x_2 \\
  -x_3 & 0 & x_1 \\
  x_2 & -x_1& 0
 \end{pmatrix}, Q_1^2+\frac{2}{3}|x|^2I \}>,
\end{equation*}
\end{small}
and the algebra $\mathbb{D}_{\tau_1}$ is generated by $\Delta \otimes I$ and $Q_1(\frac{\partial}{\partial x}):=\sum_{i=1}^3\frac{\partial}{\partial x_i} d\tau_1(Y_i)$, the curl operator.  
\\ \\
Now let us consider the case when $m$ is an arbitrary integer. We must study the generators $Q_0$, $Q_1$,...,$Q_{2m}$ of $\mathcal{P}_{\tau_m}$. As above, we can take $Q_0:=I$
and $Q_1(x):=d\tau_m(x)$ $ \ \forall x\in\R^3$. 
We denote by $D_\tau$ the invariant differential operator given by $\sum_{i=1}^3\frac{\partial}{\partial x_i}d\tau_m (Y_i)$.
Also, from now on we let $r=|x|$.
\begin{lemma}\label{lemma op dif}
\begin{itemize}
\item[(i)] For all $1\leq j\leq 2m$, $D_\tau Q_j=a_j Q_{j-1}$ for some scalar $a_j$.
\item[(ii)] $D_\tau Q_1=\Omega_{\tau_m}$, where $\Omega_{\tau_m}$ is the Casimir operator of the representation $d\tau_m$. It is well known that, in this case, $\Omega_{\tau_m} =c_{\tau_m}I$, where $c_{\tau_m}:=-m(m+1)$.
\item[(iii)] For all $0\leq j< 2m$, $Q_1Q_j-\frac{r^2}{2j+1}D_\tau Q_j$ is a $(2m+1)$-dimensional matrix-valued harmonic homogeneous polynomial and satisfies (\ref{Kinv}).
\item[(iv)] $Q_1Q_{2m}-\frac{r^2}{2m+1}D_\tau Q_{2m}=0$.
\end{itemize}
\end{lemma}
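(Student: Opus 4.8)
The plan is to base all four items on two elementary features of the operator $D_\tau=\sum_{i=1}^3\partial_i\,d\tau_m(Y_i)$, where $D_\tau$ acts on a matrix-valued function by differentiating and then multiplying on the left by the constant matrix $d\tau_m(Y_i)$: it lowers the degree of homogeneity by one, and, since the $d\tau_m(Y_i)$ are constant matrices, it commutes with the scalar Laplacian $\Delta=\sum_i\partial_i^2\otimes I$. As a preliminary step I would record that $D_\tau$ maps $\mathcal{P}_{\tau_m}$ into itself: for $F\in\mathcal{P}_{\tau_m}$ and $v\in V_{\tau_m}$, the identity (\ref{Kinv}) shows that $x\mapsto F(x)v$ is carried by the action (\ref{K-action}) of $k$ to $x\mapsto F(x)\tau_m(k)v$; applying $D_\tau$ and using that $D_\tau$, being in $\mathbb{D}_{\tau_m}$, commutes with that action forces $D_\tau F$ to satisfy (\ref{Kinv}) as well.

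For (i), I would argue that $D_\tau Q_j$ has harmonic entries: $\Delta D_\tau Q_j=D_\tau\Delta Q_j=0$ because $Q_j$ is harmonic and $\Delta$ commutes with $D_\tau$. Being an element of $\mathcal{P}_{\tau_m}$ that is harmonic and homogeneous of degree $j-1$, it lies in $(H_{j-1}\otimes End(V_{\tau_m}))^{SO(3)}$, which by the count already made in this section is one-dimensional and spanned by $Q_{j-1}$ (for $0\le j-1\le 2m$), so $D_\tau Q_j=a_jQ_{j-1}$. For (ii), the plan is the direct computation
\[
D_\tau Q_1=\sum_{j}d\tau_m(Y_j)\,\partial_j\Big(\sum_i x_i\,d\tau_m(Y_i)\Big)=\sum_{j}d\tau_m(Y_j)^2=\Omega_{\tau_m},
\]
followed by invoking the standard value $\Omega_{\tau_m}=-m(m+1)I$ on the spin-$m$ representation; in particular $a_1=-m(m+1)$.

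For (iii) and (iv) the core is a Leibniz computation. The matrices $Q_1Q_j$ and $r^2D_\tau Q_j$ both satisfy (\ref{Kinv}) (the first since $Q_1,Q_j$ do, the second since $r^2$ is rotation invariant and $D_\tau Q_j\in\mathcal{P}_{\tau_m}$) and both are homogeneous of degree $j+1$, so only harmonicity is at issue. Using $\partial_iQ_1=d\tau_m(Y_i)$, the product rule for $\Delta$ on matrix products, and $\Delta Q_1=\Delta Q_j=\Delta D_\tau Q_j=0$, I would obtain
\[
\Delta(Q_1Q_j)=2\sum_i(\partial_iQ_1)(\partial_iQ_j)=2\,D_\tau Q_j,
\]
and, since $\Delta r^2=6$ on $\R^3$ and $D_\tau Q_j$ is homogeneous of degree $j-1$,
\[
\Delta(r^2D_\tau Q_j)=6\,D_\tau Q_j+4\sum_i x_i\,\partial_i(D_\tau Q_j)=\bigl(6+4(j-1)\bigr)D_\tau Q_j=2(2j+1)\,D_\tau Q_j;
\]
subtracting gives $\Delta\bigl(Q_1Q_j-\tfrac{r^2}{2j+1}D_\tau Q_j\bigr)=0$, which is (iii). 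For (iv) I would run the same computation with $j=2m$: the combination is then a harmonic homogeneous matrix polynomial of degree $2m+1$ satisfying (\ref{Kinv}), hence an element of $(H_{2m+1}\otimes End(V_{\tau_m}))^{SO(3)}$; but that space is $0$ because the trivial representation does not occur in $H_{2m+1}\otimes End(V_{\tau_m})$ for $2m+1>2m$, so the combination vanishes identically.

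I expect the main obstacle to be purely organizational: fixing once and for all that $D_\tau$ acts by left matrix multiplication and keeping the noncommutativity straight through the Leibniz rules, and making the equivariance argument of the preliminary step precise enough to guarantee that $D_\tau$ preserves $\mathcal{P}_{\tau_m}$. Everything else reduces to first-order identities together with the one-dimensionality (respectively vanishing) of the invariant harmonic spaces $(H_j\otimes End(V_{\tau_m}))^{SO(3)}$ already established in this section.
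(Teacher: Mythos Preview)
Your proposal is correct and follows essentially the same route as the paper: the same Leibniz/Euler computations for $\Delta(Q_1Q_j)$ and $\Delta(r^2D_\tau Q_j)$, the same direct evaluation of $D_\tau Q_1$, and the same appeal to the one-dimensionality (resp.\ vanishing) of $(H_{j}\otimes End(V_{\tau_m}))^{SO(3)}$ for $j\le 2m$ (resp.\ $j=2m+1$). If anything, your preliminary justification that $D_\tau$ preserves condition~(\ref{Kinv}) is more explicit than what the paper writes.
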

\begin{proof}
$(i)$ $D_\tau Q_j$ is a matrix-valued harmonic homogeneous  polynomial of degree $j-1$ which satisfies (\ref{Kinv}). Then, $D_\tau Q_j\in ({H_{j-1}\otimes H_{j-1}})^{SO(3)}$ and it is a scalar multiple of the generator $Q_{j-1}$.    
\\ \\
For $(ii)$ just check  
\begin{align*}
D_\tau Q_1 (x)&=\sum_{i=1}^3\frac{\partial}{\partial x_i}d\tau_m(Y_i)(\sum_{k=1}^3x_k d\tau_m (Y_k))\\
&=\sum_{i=1}^3 (d\tau_m (Y_i))^2\\
&=\Omega_{\tau_m}.
\end{align*} 
Notice that from here it follows that $a_1=c_{\tau_m}$.
\\ \\
$(iii)$ follows essentially from the computations 
\begin{align*}
\Delta & [Q_1Q_j](x)=\\
&=[\Delta  Q_1](x)Q_j(x)+2\sum_{i=1}^3\left(\frac{\partial}{\partial x_i}Q_1(x)\right)\left(\frac{\partial}{\partial x_i}Q_j(x)\right)+Q_1(x)[\Delta  Q_j](x)\\
&=2\sum_{i=1}^3\left(\frac{\partial}{\partial x_i}\sum_{k=1}^3x_k d\tau_m (Y_k)\right)\left(\frac{\partial}{\partial x_i}Q_j(x)\right) \\
&=2Q_1(\frac{\partial}{\partial x}) \ Q_j(x)\\
&=2D_\tau Q_j(x)  \ \text{;}
\end{align*}
\begin{align*}
\Delta [r^2D_\tau Q_j](x)&=6D_\tau Q_j(x)+4[\sum_{i=1}^3x_i\frac{\partial}{\partial x_i}](D_\tau Q_j)(x)+r^2\Delta [D_\tau Q_j](x)\\
&=(6+4(j-1))D_\tau Q_j.
\end{align*}
Finally, the last item follows since $Q_1Q_{2m}-\frac{r^2}{2j+1}D_\tau Q_{2m}$ is a matrix-valued harmonic homogeneous polynomial of degree $2m+1$ that also satisfies (\ref{Kinv}), and we have proved that $\mathcal{P}_{\tau_m}$ is generated as $\C[|x|^2]$-module by matrix-valued harmonic homogeneous polynomials of degree less than or equal to $2m$.
\qed
\end{proof}
\begin{proposition}\label{pol en Q1}
Let $m$ be an arbitrary integer. Then $\mathcal{P}_{\tau_m}$ is generated as a $\C[|x|^2]$-module by $Q_1^j$, with $0 \leq j \leq 2m$.  
\end{proposition}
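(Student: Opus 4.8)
The plan is to compare the family $\{Q_1^{\,j}\}_{j=0}^{2m}$ with the generating family $\{Q_j\}_{j=0}^{2m}$ of $\mathcal{P}_{\tau_m}$ and to show that the $\C[|x|^2]$-linear change of coordinates between them is triangular with an invertible diagonal, so that it can be inverted.

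First one notes that for each $0\le j\le 2m$ the matrix-valued function $Q_1^{\,j}$ has entries that are homogeneous polynomials of degree $j$, and that from $Q_1(k\cdot x)=\tau_m(k)Q_1(x)\tau_m(k)^{-1}$ one gets $Q_1^{\,j}(k\cdot x)=\tau_m(k)Q_1^{\,j}(x)\tau_m(k)^{-1}$; hence $Q_1^{\,j}\in\mathcal{P}_{\tau_m}$. Invoking the decomposition $\mathcal{P}_{\tau_m}\simeq\C[|x|^2]\otimes\bigoplus_{i=0}^{2m}(H_i\otimes H_i)^{SO(3)}$ proved above, together with $(H_i\otimes H_i)^{SO(3)}=\C\,Q_i$, the homogeneous element $Q_1^{\,j}$ of degree $j$ must have the form
\begin{equation*}
Q_1^{\,j}=\sum_{k\ge 0,\ j-2k\ge 0}\beta_{j,k}\,|x|^{2k}\,Q_{j-2k},\qquad \beta_{j,k}\in\C .
\end{equation*}
Setting $\beta_j:=\beta_{j,0}$ for the leading coefficient, a straightforward induction on $j$ shows that, \emph{if} $\beta_j\neq 0$ for every $0\le j\le 2m$, each $Q_j$ can be solved for in this identity and expressed as a $\C[|x|^2]$-combination of $Q_1^{\,0},\dots,Q_1^{\,j}$; since $\{Q_j\}_{j=0}^{2m}$ generates $\mathcal{P}_{\tau_m}$, so then does $\{Q_1^{\,j}\}_{j=0}^{2m}$. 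Thus the whole statement reduces to the claim that the leading coefficients $\beta_j$ are all nonzero.

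To settle that claim I would evaluate at a single point. Fix $\xi\in S^2$; recalling $Q_1(x)=d\tau_m(x)$ under the identification of $\R^3$ with $so(3)$, we have $Q_1^{\,j}(\xi)=d\tau_m(\xi)^{\,j}$. By the equivariance $\tau_m(k)d\tau_m(\xi)\tau_m(k)^{-1}=d\tau_m(k\cdot\xi)$ all the operators $d\tau_m(\xi)$, $\xi\in S^2$, are conjugate, so it suffices to look at one of them, and $d\tau_m$ being the differential of the irreducible $(2m+1)$-dimensional representation of $SO(3)$, this operator is diagonalizable with $2m+1$ \emph{distinct} eigenvalues; up to a common normalization these are $i\ell$, $\ell=-m,\dots,m$, i.e.\ $\sqrt{-1}\,d\tau_m(\xi)$ has the distinct eigenvalues $-m,\dots,m$ (this is exactly what makes the Lagrange interpolation formula for $P_j(\xi)$ in Theorem 3 meaningful). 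Hence the minimal polynomial of $d\tau_m(\xi)$ has degree $2m+1$, so the $2m+1$ matrices $Q_1^{\,0}(\xi),Q_1(\xi),\dots,Q_1^{\,2m}(\xi)$ are $\C$-linearly independent in $End(V_{\tau_m})$ by a Vandermonde argument.

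Finally one combines the two facts: evaluating the displayed identity at $x=\xi$ shows that $Q_1^{\,j}(\xi)$ lies in $\mathrm{span}_\C\{Q_0(\xi),\dots,Q_j(\xi)\}$ with the coefficient of $Q_j(\xi)$ equal to $\beta_j$, so the matrix expressing $(Q_1^{\,0}(\xi),\dots,Q_1^{\,2m}(\xi))$ in terms of $(Q_0(\xi),\dots,Q_{2m}(\xi))$ is lower triangular with diagonal $(\beta_0,\dots,\beta_{2m})$. Since the $2m+1$ vectors on the left are linearly independent and all lie in the span of the $2m+1$ vectors on the right, that span is $(2m+1)$-dimensional and the transition matrix is invertible; being triangular, its diagonal entries cannot vanish, so $\beta_j\neq 0$ for all $j$, as required. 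The only genuinely nontrivial input is the spectral description of $d\tau_m(\xi)$; the rest is bookkeeping. (A variant avoiding pointwise evaluation would run an induction off the three-term recursion $Q_1Q_j=b_jQ_{j+1}+\tfrac{a_j}{2j+1}r^2Q_{j-1}$ furnished by Lemma \ref{lemma op dif}(i),(iii),(iv), where one finds $\beta_j=\prod_{i=1}^{j-1}b_i$, but then one must still prove each $b_i\neq 0$, so the pointwise argument seems preferable.)
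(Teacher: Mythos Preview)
Your argument is correct. Both your proof and the paper's hinge on the same decisive fact---that $d\tau_m(\xi)$ has $2m+1$ distinct eigenvalues, so $I,Q_1(\xi),\dots,Q_1^{2m}(\xi)$ are linearly independent---but the surrounding architecture differs. The paper proceeds constructively: using Lemma~\ref{lemma op dif}(i),(iii) it shows by induction that each $Q_j$ is a \emph{monic} polynomial in $Q_1$ over $\C[|x|^2]$, via the three-term recursion $Q_{j+1}=Q_1Q_j-\tfrac{a_j}{2j+1}r^2Q_{j-1}$, and the evaluation at $e_1$ is used only to check this expression does not vanish. You instead expand $Q_1^{\,j}$ in the harmonic basis $\{Q_i\}$, obtain a triangular transition matrix, and read off invertibility from the same pointwise independence. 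Your route is a bit leaner---it does not invoke $D_\tau$ or Lemma~\ref{lemma op dif} at all---but the paper's route has a payoff: the explicit recursion it produces (equation~(\ref{Qj})) is exactly what is needed later, in the proof that $V^s$ is $D_\tau$-invariant (Lemma~2 in Section~\ref{sec:2.1}). Your parenthetical ``variant'' is in fact the paper's actual argument, and the non-vanishing of each $b_i$ is handled there precisely by the monicity-plus-evaluation trick.
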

\begin{proof}
On the one hand, every power of $Q_1$ satisfies ($\ref{Kinv}$) and, 
on the other hand, for any $0\leq j\leq 2m$ there is only one matrix-valued harmonic homogeneous  polynomial of degree $j$. 
\\ \\
The proposition follows by an inductive argument. 
Let $1< j < 2m$, and assume that $Q_{j-1}$ and $Q_{j}$ are monic polynomials on $Q_1$ with coefficients in $\C[|x|^2]$. 
By the previous Lemma, 
\begin{equation}\label{pol en Q_1}
Q_1Q_{j}-\frac{r^2}{2j+1}D_\tau Q_{j}=Q_1Q_j-\frac{r^2}{2j+1}a_{j}Q_{j-1}
\end{equation}
is a matrix-valued harmonic homogeneous polynomial satisfying (\ref{Kinv}). From the inductive hypothesis it results a linear combination of powers of $Q_1$ with coefficients in $\C[|x|^2]$. 
We just need to prove that it is not the null matrix-valued polynomial. 
\\ \\
Notice that $Q_1(e_1)=d\tau_m(Y_1)$ and it is well known that $d\tau_m(Y_1)$ can be diagonalized and has $2m+1$ different eigenvalues. Then the minimal polynomial of $Q_1(e_1)$ coincides with its characteristic polynomial (which has degree $2m+1$). Considering $x=e_1$, (\ref{pol en Q_1}) results a monic polynomial on $Q_1$ of degree $j+1$ with constant coefficients and, as $j+1\leq 2m$, it can not be null. 
\\ \\
Therefore, $\mathcal{P}_{\tau_m}$ is generated as $\C[|x|^2]$-module by $Q_0=I$, $Q_1(x)=d\tau_m(x)$ and 
\begin{equation} \label{Qj}
Q_{j+1}:=Q_1Q_{j}-\frac{r^2}{2j+1}D_\tau Q_{j}=Q_1Q_j-\frac{r^2}{2j+1}a_{j}Q_{j-1}, \ \ \text{for } \ 1<j< 2m.
\end{equation}
\qed
\end{proof}
Thus, Theorem 1 stated in the Introduction follows directly.

\section{Spherical functions of type $\tau$}
\label{sec:2}
Fixed an arbitrary irreducible unitary representation $\tau$ of $SO(3)$, we are going to describe three methods to compute all the $\tau$-spherical functions of the commutative triple $(SO(3)\ltimes\R^3, SO(3), \tau)$.
\\ \\
We know, from general theory, that the complete set of $\tau$-spherical functions is parametrized by $r\in\R_{>0}$ and $j$ in a finite set of $2m+1$ elements (cf \cite{Fulvio} Theorem 11.1).

\subsection{Spherical functions of type $\tau$ in terms of invariant polynomials and classical spherical functions}
\label{sec:2.1}
We consider the problem of writing a $\tau$-spherical function $\Phi$ as a linear combination of $\{Q_j\}_{j=0}^{2m}$ with coefficients of the form $v_jf_j(r)$ where $v_j$ are scalars and $f_j(r)$ are certain normalized radial functions to be defined later, that is   
\begin{equation}\label{comb Phi Q_j}
\Phi(x)=v_0f_0(r)I+v_1f_1(r)Q_1(x)+...+v_{2m}f_{2m}(r)Q_{2m}(x), \text{ where } r=|x|.
\end{equation}
Since the functions  $f_j$ are radial and the matrix-valued functions $Q_j$ described in the previous section satisfy (\ref{Kinv}), then it follows that the RHS satisfies (\ref{Kinv}).
\\ \\
Applying the differential operators $\Delta \otimes I$ and $D_\tau$ on (\ref{comb Phi Q_j})  we get the following identities using Euler's identity $\left(\sum_{i=1}^3x_i\frac{\partial}{\partial x_i}\right)Q=jQ$ for any homogeneous polynomial $Q$ of degree $j$
\begin{equation}\label{Dalta Phi}
(\Delta ( \sum_{j=0}^{2m} f_j Q_j) )(x)= \sum_{j=0}^{2m}[f_j^{''}(r)+\frac{2+2j}{r}f_j^{'}(r)]Q_j(x) \text{ and }
\end{equation}
\begin{equation}\label{D1 Phi}
D_\tau (\sum_{j=0}^{2m}f_jQ_j)(x)=\sum_{j=0}^{2m}[\frac{f_j^{'}(r)}{r}Q_1(x)Q_j(x)+f_j(r)D_\tau Q_j(x)].
\end{equation}
Since $\Phi$ is an eigenfunction of $\Delta \otimes I$ and of $D_\tau$, we look for $f_jQ_j$ that are eigenfunctions of $\Delta\otimes I$ corresponding to the same eigenvalue $\lambda\in\C$. Then, for each $0\leq j\leq 2m$, $f_j$ must satisfy the following ODE
\begin{equation}\label{ODE fj}
f_j^{''}(r)+\frac{2+2j}{r}f_j^{'}(r)=\lambda f_j(r).
\end{equation}
Therefore, $\Gamma(\alpha +1)\frac{J_{\alpha}(i\lambda^{1/2}r)}{(i\lambda^{1/2}r/2)^{\alpha}}$ is a solution of (\ref{ODE fj}) with value $1$ at $r=0$, where $J_\alpha(x)= \sum_{k=0}^\infty \frac{(-1)^k}{k! \Gamma(k+\alpha+1)} {\left({\frac{x}{2}}\right)}^{2k+\alpha}$ is the Bessel function of the first kind and of order $\alpha=j+\frac{1}{2}$. 
\\ \\ 
As, by definition, the $\tau$-spherical functions are bounded, we just have to consider $\lambda=-s^2$ with $s\in\R_{>0}$. Then, associated to the eigenvalue $\lambda=-1$ there is the family of functions $\{ f_j(r):=\Gamma(\frac{3}{2}+j)\frac{J_{j+\frac{1}{2}}(r)}{{(\frac{r}{2})}^{j+\frac{1}{2}}} \}_{j=0}^{2m}$, and associated to an arbitrary eigenvalue $-s^2$,  there is the family $\{f_j^s(r):=f_j(s r)\}_{j=0}^{2m}$ (for a reference see \cite{D}). 
Observe that, for each integer $0\leq j \leq 2m$, $\{f_j^s\}_{s\in\R_{>0}}$ is the set of classical spherical functions associated to the Gelfand pair $(SO(2j+3)\ltimes\R^{2j+3},SO(2j+3))$. Also, as noticed by one of the referees, this sets of functions appear when computing Fourier transforms in $\R^3$ of radial functions times solid spherical harmonics of degree $j$ (cf.  Theorem 3.10, Chapter 4 of \cite{SteinWeiss}). 
\\ \\
From the well known recurrence relation $J_\alpha(z)=\frac{z}{2\alpha}[J_{\alpha-1}(z)+J_{\alpha+1}(z)]$ and differential relation 
$\frac{d}{dz}[\frac{J_\alpha(z)}{z^\alpha}]=-\frac{J_{\alpha+1}(z)}{z^\alpha}$ (for a reference see \cite{Szego}), we can derive the following identities for the functions $f_j$ that will be very useful
\begin{equation}\label{recurrence rel}
f_j (r)=f_{j-1}(r)+\frac{r^2}{(2j+1)(2j+3)}f_{j+1}(r) ;
\end{equation}
\begin{equation}\label{differential rel}
\frac{\frac{d}{dr}f_j(r)}{r}=-\frac{f_{j+1}(r)}{2j+3}.
\end{equation}
And for the functions $f_j^{s}$ it holds
\begin{equation}\label{recurrence rel s}
f_j^{s} (r)=f_{j-1}^{s}(r)+\frac{(sr)^2}{(2j+1)(2j+3)}f_{j+1}^s(r) ;
\end{equation}
\begin{equation}\label{differential rel s}
\frac{\frac{d}{dr}f_j^{s}(r)}{s^2r}=-\frac{f_{j+1}^s(r)}{2j+3}.
\end{equation}
Now we set $V^1$ the vector space generated by $\mathcal{B}^{1}:=\{f_jQ_j\}$. It is $(2m+1)$-dimensional and similarly, for each $s>0$, we consider the vector spaces $V^s$ generated by $\mathcal{B}^s:=\{f_j^s Q_j\}$.
\begin{lemma}
The vector space $V^1$ is invariant with respect to the differential operator $D_\tau$.
\end{lemma}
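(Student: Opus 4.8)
The plan is to compute $D_\tau(f_j Q_j)$ for each $0\le j\le 2m$ and to show that the result is again a linear combination of $f_{j-1}Q_{j-1}$ and $f_{j+1}Q_{j+1}$; equivalently, $D_\tau$ acts on the basis $\mathcal{B}^1$ by a tridiagonal matrix with vanishing main diagonal. Since $D_\tau$ is linear, once each basis element is treated the invariance of $V^1$ follows.

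First I would apply the identity (\ref{D1 Phi}) to a single summand $f_j Q_j$, obtaining
\[
D_\tau(f_j Q_j)(x)=\frac{f_j'(r)}{r}\,Q_1(x)\,Q_j(x)+f_j(r)\,D_\tau Q_j(x).
\]
Then I would feed in the structural facts of Lemma \ref{lemma op dif}: part (i) gives $D_\tau Q_j=a_j Q_{j-1}$, and the defining relation (\ref{Qj}) gives $Q_1 Q_j=Q_{j+1}+\frac{r^2}{2j+1}\,a_j Q_{j-1}$ for $1\le j<2m$, while for $j=2m$ the term $Q_{2m+1}$ drops out by Lemma \ref{lemma op dif}(iv), and for $j=0$ one simply has $Q_1 Q_0=Q_1$ and $D_\tau Q_0=0$. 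Collecting terms, the coefficient of $Q_{j+1}$ equals $\frac{f_j'(r)}{r}$ and the coefficient of $Q_{j-1}$ equals $a_j\bigl(f_j(r)+\frac{r^2}{2j+1}\cdot\frac{f_j'(r)}{r}\bigr)$.

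The second step is to identify these coefficients using the Bessel identities already recorded for the radial functions $f_j$. The differential relation (\ref{differential rel}), namely $\frac{f_j'(r)}{r}=-\frac{f_{j+1}(r)}{2j+3}$, converts the $Q_{j+1}$-coefficient into $-\frac{1}{2j+3}f_{j+1}(r)$; substituting it again in the $Q_{j-1}$-coefficient and then invoking the recurrence relation (\ref{recurrence rel}), $f_j(r)=f_{j-1}(r)+\frac{r^2}{(2j+1)(2j+3)}f_{j+1}(r)$, makes that coefficient collapse to exactly $a_j f_{j-1}(r)$. This yields the clean formula
\[
D_\tau(f_j Q_j)=a_j\,f_{j-1}Q_{j-1}-\tfrac{1}{2j+3}\,f_{j+1}Q_{j+1},
\]
where by convention the $(j-1)$-term is absent for $j=0$ (so $D_\tau(f_0 Q_0)=-\tfrac13\, f_1 Q_1$) and the $(j+1)$-term is absent for $j=2m$. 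In every case $D_\tau(f_j Q_j)\in V^1$, hence $V^1$ is $D_\tau$-invariant.

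The point that needs the most care — and the step I would double-check — is the top index $j=2m$: there the would-be $f_{2m+1}Q_{2m+1}$ summand must be annihilated, which is exactly where Lemma \ref{lemma op dif}(iv) enters, and one has to verify that the constant in part (iv) is compatible with the coefficient $\frac{1}{(4m+1)(4m+3)}$ produced by the recurrence (\ref{recurrence rel}) evaluated at $j=2m$, so that the leftover collapses to $a_{2m}f_{2m-1}Q_{2m-1}$ exactly. The case $j=0$ is immediate from $D_\tau Q_0=0$, and everything else is the routine substitution sketched above; as a by-product this computation produces the explicit tridiagonal matrix of $D_\tau$ on $\mathcal{B}^1$ that will be needed afterwards to conclude that $D_\tau$ has simple spectrum on $V^s$.
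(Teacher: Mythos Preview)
Your proposal is correct and follows essentially the same route as the paper: both start from the identity $D_\tau(f_jQ_j)=\frac{f_j'}{r}Q_1Q_j+f_j\,D_\tau Q_j$, feed in Lemma~\ref{lemma op dif} together with the Bessel relations (\ref{recurrence rel})--(\ref{differential rel}), and arrive at the tridiagonal formula $D_\tau(f_jQ_j)=a_jf_{j-1}Q_{j-1}-\tfrac{1}{2j+3}f_{j+1}Q_{j+1}$ with the appropriate truncation at $j=0$ and $j=2m$. The only cosmetic difference is the order of substitution (you replace $Q_1Q_j$ first and then simplify the radial coefficients, whereas the paper replaces $f_j$ and $f_j'/r$ first and then recognizes $Q_{j+1}$); your caution about the constant at $j=2m$ is well placed, since the coefficient in Lemma~\ref{lemma op dif}(iv) must indeed be $\frac{r^2}{4m+1}$ for the cancellation to match the recurrence.
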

\begin{proof}
\begin{equation}
D_\tau (f_jQ_j)(x)=\frac{f_j^{'}(r)}{r}Q_1(x)Q_j(x)+f_j(r)D_\tau Q_j(x).
\end{equation}
Using (\ref{recurrence rel}) and  (\ref{differential rel}) we get 
\begin{align*}
D_\tau &(f_jQ_j)(x)=\\
&=-\frac{f_{j+1}(r)}{2j+3}Q_1(x)Q_j(x)+[f_{j-1}(r)-\frac{r^2}{(2j+1)(2j+3)}f_{j+1}(r)]D_\tau Q_j(x)\\
&=-\frac{f_{j+1}(r)}{2j+3}[Q_1(x)Q_j(x)-\frac{r^2}{2j+1}D_\tau Q_j(x)]+f_{j-1}(r)D_\tau Q_j(x).
\end{align*}
By the definition of $Q_{j+1}$ given in (\ref{Qj}) and Lemma \ref{lemma op dif} it follows
\begin{equation}
D_\tau (f_jQ_j)=-\frac{f_{j+1}}{2j+3}Q_{j+1}+a_jf_{j-1}Q_{j-1} \ \ \forall \ 1\leq j\leq 2m-1 \ \text{ and }
\end{equation}
\begin{equation}
D_\tau (f_0Q_0)=-\frac{f_{1}}{3}Q_{1} \ ,
\end{equation}
\begin{equation}
D_\tau (f_{2m}Q_{2m})=a_{2m}f_{2m-1}Q_{2m-1}.
\end{equation}
\qed
\end{proof}
Analogously, on the vector space $V^s$ we obtain
\begin{equation}
D_\tau(f_j^{s}Q_j)=-\frac{s^2}{2j+3}f_{j+1}^{s}Q_{j+1}+a_jf_{j-1}^{s}Q_{j-1}.
\end{equation}
Finally, we can compute  the matrix $[D_\tau]_{\mathcal{B}^{s}}^{\mathcal{B}^{s}}$ corresponding to the operator $D_\tau$ with respect to the basis $\mathcal{B}^{s}$
\begin{equation}
[D_\tau]_{\mathcal{B}^{s}}^{\mathcal{B}^{s}}=\begin{pmatrix}
  0 & a_1 & 0&... & ...&...& 0\\
  -\frac{s^2}{3} & 0 & a_2 & 0&...& ...& ...\\
  0 & -\frac{s^2}{5} & 0 & a_3 & 0&...& ...\\
 ...& ...&...&...&... & ... & ...\\
     ...& ...&...&...&... & ... & ...\\
   ...&...& ...&0&-\frac{s^2}{4m-1}&0& a_{2m}\\
   0& ...&...&...&0&-\frac{s^2}{4m+1}&0
 \end{pmatrix}.
\end{equation}
If $v\in\C^{2m+1}$ is an eigenvector of $[D_\tau]_{\mathcal{B}^{1}}^{\mathcal{B}^{1}}$ with eigenvalue $\lambda\in\C$, then $s\lambda$ is an eigenvalue of  $[D_\tau]_{\mathcal{B}^{s}}^{\mathcal{B}^{s}}$ and an eigenvector associated to it is given by $\tilde{v}$ whose coordinates are 
\begin{equation}\label{coordenadas de autovectores}
\tilde{v}_i={s^i}{v_i} \ \ \text{ for all } 0\leq i\leq 2m.
\end{equation}
Computing the eigenvectors of  $[D_\tau]_{\mathcal{B}^{1}}^{\mathcal{B}^{1}}$ we can obtain a complete set of $\tau$-spherical functions. Indeed, for each $s\in\R_{>0}$ let $\{v^{(s,k)}:=(1,v_1^{(s,k)},...,v_{2m}^{(s,k)})\}_{k=0}^{2m}$ be the set of eigenvectors of $[D_\tau]_{\mathcal{B}^{s}}^{\mathcal{B}^{s}}$ (calculated from the eigenvectors of $[D_\tau]_{\mathcal{B}^{1}}^{\mathcal{B}^{1}}$), then the set of $\tau$-spherical functions is  parametrized by $s\in\R_{>0}$ and  $k\in\Z,\  0\leq k\leq 2m$  and is given by
\begin{equation}
\{\Phi_{s,k}(x)=f_0^{s}(r)I+v_1^{(s,k)}f_1^{s}(r)Q_1(x)+...+v_{2m}^{(s,k)}f_{2m}^{s}(r)Q_{2m}(x)\}_{s,k}
\end{equation}
and we have proved Theorem 2.
\\ \\
Finally, in order to find explicitly the eigenvalues and eigenvectors of $[D_\tau]_{\mathcal{B}^{1}}^{\mathcal{B}^{1}}$ we need an explicit formula for the coefficients $a_j$. From Lemma \ref{lemma op dif} we know that $a_1=c_{\tau_m}$. The rest of them are computed  in the appendix and they are
\begin{equation}
a_{j+1}=\frac{(j+1)^2}{2j+1}(c_{\tau_m}+\frac{j^2+2j}{4}).
\end{equation}

\subsection{Integral formula for spherical functions of type $\tau$}
\label{sec:2.2}
Let $x\in\R^3\backslash\{0\}$ and let $K_x$ be the stabilizer subgroup of $K=SO(3)$ with respect to $x$. As $K_x$-module, $(\tau_m, V_{\tau_m})$ decomposes as a direct sum of $2m+1$ one-dimensional subspaces.
So the matrix $Q_1(x)=d\tau_m(x)$ can be diagonalized. We denote by $\lambda_j(x)$ its eigenvalues and by $q_j(x)$  its normalized eigenvectors respectively (where $j$ is an integer between $0\leq j\leq 2m$).
If we consider $\tilde{x}=\frac{x}{|x|}\in S^2$, since $d\tau_m(x)=|x|d\tau_m(\tilde{x})$ (by linearity), it is easy to see that $\lambda_j(x)=|x|\lambda_j(\tilde{x})$ and $q_j(x)=q_j(\tilde{x})$. Moreover, it is enough to know the eigenvalues and eigenvectors of $d\tau_m(e_1)$. Indeed, since every $\tilde{x}\in S^2$ can be written as $\tilde{x}=k\cdot e_1$ for some $k\in SO(3)$,  we get that $d\tau_m(\tilde{x})=d\tau_m(k\cdot e_1)=\tau_m(k)d\tau_m(e_1)\tau_m(k^{-1})$ and therefore if $q$ is an eigenvector of $d\tau_m(e_1)$ with eigenvalue $\lambda$, then $\tau_m(k)q$ is an eigenvector of $d\tau_m(\tilde{x})$ with the same eigenvalue $\lambda$. 
Moreover, it is well know that for all $j\in\mathbb{Z}$, $-m\leq j \leq m$
\begin{equation} \label{eigenvalues of tau}
\lambda_j(e_1)=\lambda_j(\tilde{x})=ij \ \ \forall \tilde{x}\in S^{2} \text{ and } \lambda_j(x)=ij|x| \ \ \forall x\in \mathbb{R}^{3}.
\end{equation}
Now observe that, for each point $x\in\R^3$ and fixed $-m\leq j\leq m$, the matrix $q_j(x) q_j(x)^t$ is the orthogonal projection onto the eigenspace associated to the eigenvalue $\lambda_j(x)= ij|x|$ of the matrix $d\tau_m(x)$. We denote it by $P_j(x)$ and it has the following properties:      
\begin{itemize}
\item[$\circ$] For every $x\in \R^3\backslash\{0\}$, 
\begin{equation}\label{Pj angular} 
P_j(x)=P_j(\frac{x}{|x|}).
\end{equation}
\item[$\circ$] Given $\xi\in S^2$ and $k\in SO(3)$ such that $\xi=k\cdot e_1$,  
\begin{equation}\label{Pj inv}
P_j(\xi)=\tau_m(k)P_j(e_1)\tau_m(k)^{t}.
\end{equation}
Notice that the transpose matrix $\tau_m(k)^{t}$ coincides with $\tau_m(k)^{-1}$.
\item[$\circ$] Since all the eigenvalues of $d\tau_m(x)$ are different, by Cayley-Hamilton's Theorem and Lagrange interpolation formula we can take,  
\begin{equation}\label{Pj}
P_j(x)=\prod_{l\neq j ; \  l=-m}^{m}\frac{d\tau_m(x)-il|x|I}{ij|x|-il|x|}.
\end{equation}
\item[$\circ$] For all $j$, \begin{equation}\label{j vs -j}
P_{-j}(x)=P_j(-x).
\end{equation}
\end{itemize}
For $s\in\R_{>0}$ and $j\in\{-m,...,m\}$ we set
\begin{equation}
\Phi_{s,j}(x):={d_{\tau_m}}\int_{SO(3)} e^{-is<k\cdot x,e_1>} \tau_m(k^{-1})P_j(e_1)\tau_m(k) \ dk
\end{equation}
where $d_{\tau_m}=2m+1$ is the dimension of $V_{\tau_m}$ and we can obtain:
\begin{itemize}
\item[$\circ$]$\Phi_{s,j}$ is an eigenfunction of $\Delta\otimes I$ with  eigenvalue $-s^2$, since $e^{-is<x,k^{-1}\cdot e_1>}$ is an eigenfunction of $\Delta$ with the same eigenvalue. 
\item[$\circ$] For $s\ne 0$, $\Phi_{s,j}(x)$ can be rewritten as an integral over the sphere $S^2\simeq K/K_{e_1}$:
\begin{equation}\label{formulaintegral}
\Phi_{s,j}(x)=d_{\tau_m}\int_{S^2}e^{-is<x,\xi>} P_j(\xi) \ d\sigma(\xi).
\end{equation}
where $\sigma$ is the normalized $O(3)$-invariant measure on the sphere $S^2 \subset \R^3$.
\item[$\circ$] $\Phi_{s,j}$ is an eigenfunction of $D_\tau$:
\begin{align*}
D_\tau \Phi_{s,j}(x)&=d_{\tau_m}\int_{S^2}\sum_{i=1}^{3}d\tau_m(Y_i)\frac{\partial}{\partial x_i}(e^{-is<x,\xi>})P_j(\xi) \  d\sigma(\xi)\\ 
&=d_{\tau_m}\int_{S^2}(-is)e^{-is<x,\xi>}(\sum_{i=1}^{3}\xi_i d\tau_m(Y_i))P_j(\xi) \ d\sigma(\xi)\\
&=d_{\tau_m}\int_{S^2}(-is)e^{-is<x,\xi>}d\tau_m(\xi)P_j(\xi) \ d\sigma(\xi) \\ 
&=-isd_{\tau_m}\int_{S^2}e^{-is<x,\xi>}\lambda_j(\xi)P_j(\xi) \ d\sigma(\xi)\\
&=sj\Phi_{s,j}(x).
\end{align*}
\item[$\circ$] $\Phi_{s,j}$ satisfies the property (\ref{Kinv}) for all $k\in SO(3)$:
\begin{align*}
\tau_m(k)\Phi_{s,j}(k^{-1}\cdot x)\tau_m(k^{-1})&=d_{\tau_m}
\int_{S^2}e^{-is<k^{-1}\cdot x,\xi>} \tau_m(k)P_j(\xi)\tau_m(k)^t d\sigma(\xi)\\
&=d_{\tau_m}\int_{S^2}e^{-is<x,k\cdot\xi>} P_j(k\cdot\xi) d\sigma(\xi)\\
&=d_{\tau_m}\int_{S^2}e^{-is<x,\xi>} P_j(\xi) d\sigma(\xi)\\
&=\Phi_{s,j}(x)
\end{align*}
because the measure on $S^2$ is invariant under rotations.
\item[$\circ$] $\Phi_{s,j}(0)$ is the identity map from $V_{\tau_m}$ to $V_{\tau_m}$. Indeed, if we consider the basis of $V_{\tau_m}$ given by the normalized eigenvectors $\{q_j(e_1)\}$, then 
\begin{align*}
\Phi_{s,j}(0)q_i(e_1)&=d_{\tau_m}\int_{SO(3)} [\tau_m(k)P_j(e_1)\tau_m(k)^t] \ q_i(e_1) \ dk\\
&=d_{\tau_m}\int_{SO(3)} \tau_m(k)<\tau_m(k)^{t} q_i(e_1),q_j(e_1)>q_j(e_1) \ dk\\
&=d_{\tau_m}\int_{SO(3)} <q_i(e_1),\tau_m(k)q_j(e_1)>\tau_m(k)q_j(e_1) \ dk
\end{align*} 
and thus
\begin{align*}
<\Phi_{s,j}(0)&q_i(e_1),q_k(e_1)>=\\
&={d_{\tau_m}}\int_{SO(3)}<q_i(e_1),\tau_m(k)q_j(e_1)><\tau_m(k)q_j(e_1),q_k(e_1)> \ dk\\
&={d_{\tau_m}}\int_{SO(3)}<q_i(e_1),\tau_m(k)q_j(e_1)>\overline{<q_k(e_1),\tau_m(k)q_j(e_1)>} \ dk\\
&=\delta_{k,i}
\end{align*} 
where the last equality comes from the orthogonality relations of the matrix entries of $\tau_m(k)$.
\item[$\circ$] Finally, 
\begin{equation}\label{Phi j vs -j}
\Phi_{s,-j}(x)=\Phi_{s,j}(-x)
\end{equation}
follows from  (\ref{j vs -j}) and the invariance under the orthogonal group $O(3)$ of the measure on $S^2$:
\begin{align*}
\Phi_{s,-j}(x)&=d_{\tau_m}\int_{S^2}e^{-is<x,\xi>} P_{-j}(\xi) \ d\sigma(\xi)\\
&=d_{\tau_m}\int_{S^2}e^{-is<x,\xi>} P_{j}(-\xi) \ d\sigma(\xi)\\
&=d_{\tau_m}\int_{S^2}e^{is<x,\xi>} P_{j}(\xi) \ d\sigma(\xi)\\
&=\Phi_{s,j}(-x).
\end{align*}
Also, as $P_j(x)$ are orthogonal projections we have,
\begin{equation}
[\Phi_{s,j}(x)]^{*}=\Phi_{s,j}(-x),
\end{equation}
where the left hand side denotes the conjugate transpose matrix of $\Phi_{s,j}(x)$.
\end{itemize}
Let $ \  \widehat{\cdot} \ $ denote the classical Fourier transform.  Therefore, a non-trivial $\tau$-spherical function is given by 
\begin{equation}\label{como transf de F}
\Phi_{s,j}(x)=d_{\tau_m}\int_{S^2}e^{-is<x,\xi>}P_j(\xi)d\sigma(\xi)=d_{\tau_m}\widehat{P_j(\frac{.}{s})\sigma_s}(x)
\end{equation}
and we have proved Theorem 3. 
\\ \\
As a corollary of this representation we observe the next fact. 
Let $V$ be a finite dimensional hermitian inner product space.  
A continuous $End(V)$-valued function $F$ on a group $G$ is said to be \textit{of positive type} (\cite{Fulvio} section 9) if  the matrix given by $(<F(x_jx_k^{-1})v_k,v_j>)_{jk}$  is positive semi-definite for every choice of elements $x_1,...,x_n\in G$ and for every $v_1,...,v_n\in V$.
From a simple deduction it follows that if all the matrix entries  of $F$ are of positive type (with the usual definition), then it is of positive type.
\begin{corollary}
Every $\tau$-spherical function $\Phi_{s,j}$ is of positive type. 
\end{corollary}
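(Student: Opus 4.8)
The plan is to read off everything from the integral representation (\ref{formulaintegral}), which exhibits $\Phi_{s,j}$ as (a multiple of) the Fourier transform of the $End(V_{\tau_m})$-valued measure $P_j(\xi)\,d\sigma(\xi)$ on $S^2$. The one property of this measure that I will use is that each $P_j(\xi)$, being an orthogonal projection, satisfies $P_j(\xi)=P_j(\xi)^{*}=P_j(\xi)^{2}$; in particular it is positive semi-definite, but more usefully this gives the factorization $\langle P_j(\xi)u,w\rangle=\langle P_j(\xi)u,P_j(\xi)w\rangle$ for all $u,w\in V_{\tau_m}$. That factorization is the heart of the matter: it lets the Hermitian form appearing in the definition of ``positive type'' collapse into a single squared norm under the integral sign.

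Concretely, after identifying the sections of $E_{\tau_m}$ with $V_{\tau_m}$-valued functions on $N=\R^{3}$, the group product $x_px_q^{-1}$ becomes the translation $x_p-x_q$, so I must check that $\sum_{p,q}\langle\Phi_{s,j}(x_p-x_q)v_q,v_p\rangle\ge 0$ for all $x_1,\dots,x_n\in\R^{3}$ and all $v_1,\dots,v_n\in V_{\tau_m}$. Substituting (\ref{formulaintegral}), splitting the character as $e^{-is\langle x_p-x_q,\xi\rangle}=e^{-is\langle x_p,\xi\rangle}\,\overline{e^{-is\langle x_q,\xi\rangle}}$, and using $\langle P_j(\xi)v_q,v_p\rangle=\langle P_j(\xi)v_q,P_j(\xi)v_p\rangle$, the double sum inside the integral reorganizes into the norm squared of $\sum_{q}e^{is\langle x_q,\xi\rangle}P_j(\xi)v_q$, whence
\[
\sum_{p,q}\langle\Phi_{s,j}(x_p-x_q)v_q,v_p\rangle
= d_{\tau_m}\int_{S^{2}}\Bigl\|\sum_{q}e^{is\langle x_q,\xi\rangle}P_j(\xi)v_q\Bigr\|^{2}\,d\sigma(\xi)\ \ge\ 0,
\]
since $\sigma\ge 0$ and $d_{\tau_m}>0$. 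This proves that $\Phi_{s,j}$ is of positive type (and the same computation applied to $\Phi_{s,j}$ as a function on $SO(3)\ltimes\R^{3}$, or the equivalence of the two notions from \cite{Fulvio}, gives the statement on $G$ as well).

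I do not expect a genuine obstacle here: the argument is just the operator-valued Bochner theorem specialized to the compactly supported projection-valued measure $P_j(\xi)\,d\sigma(\xi)$ on $sS^{2}$. The only point requiring care is the bookkeeping of complex conjugations together with the chosen convention for linearity of the Hermitian pairing, so that the reorganized double sum is honestly $\|\cdot\|^{2}$ rather than something indefinite. I would avoid the alternative route through the scalar Bochner theorem applied entry by entry, since only the \emph{diagonal} entries of $\Phi_{s,j}$ are Fourier transforms of positive measures, while the off-diagonal entries are Fourier transforms of complex measures; the direct matrix computation above is cleaner and handles all entries simultaneously.
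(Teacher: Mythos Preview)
Your argument is correct. Substituting the integral formula, splitting the exponential, and using $P_j(\xi)^{*}P_j(\xi)=P_j(\xi)$ to rewrite $\langle P_j(\xi)v_q,v_p\rangle$ as $\langle P_j(\xi)v_q,P_j(\xi)v_p\rangle$ collapses the double sum under the integral into $\bigl\|\sum_q e^{is\langle x_q,\xi\rangle}P_j(\xi)v_q\bigr\|^{2}$, non-negative regardless of which argument of the Hermitian pairing is taken conjugate-linear. The bookkeeping worry you flag is thus harmless.

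The paper proceeds differently: it asserts that each scalar matrix entry of $\Phi_{s,j}$ is the Fourier transform of a positive finite Borel measure, hence of positive type, and then appeals to a ``simple deduction'' that entrywise scalar positive type implies the matrix notion. Your closing paragraph anticipates precisely this route and correctly identifies its weak point: the off-diagonal entries $[P_j(\xi)]_{ab}$ are genuinely complex, so the corresponding entries of $\Phi_{s,j}$ are Fourier transforms of complex measures rather than positive ones; and the general implication ``all scalar entries of positive type $\Rightarrow$ $F$ of positive type'' fails already for constant Hermitian matrices that are not positive semidefinite. Your direct operator-valued Bochner computation sidesteps both issues and is the more robust argument.
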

\begin{proof}
All the matrix entries of $\Phi_{s,j}$ are of positive type since, from (\ref{como transf de F}), they are the classical Fourier transform of a positive finite Borel measure.
\qed
\end{proof}
This is a particular case of a general result proved in 
\cite{Fulvio} Theorem 9.4 with a different proof.

\subsection{Spherical functions of type $\tau$ as matrix derivatives of classical spherical functions}
\label{sec:2.3}
In this paragraph we are going to prove that all the $\tau$-spherical functions can be obtained by applying adequate differential operators from $\mathbb{D}_{\tau_m}$ to the classical spherical functions associated to the Gelfand pair $(SO(3)\ltimes \R^3,SO(3))$.
\\ \\
For each $s\in \R_{>0}$ and $j\in\mathbb{Z}$, $-m\leq j\leq m$, let $\varphi_s$ be the classical spherical function associated to the Gelfand pair $(SO(3)\ltimes \R^3,SO(3))$ with eigenvalue $-s^2$ with respect to the Laplacian operator. Inspired by (\ref{Pj})  we define  $D_{s,j}\in \mathbb{D}_{\tau_m}$ as the differential operator
\begin{equation}\label{D_s,j}
D_{s,j}:=\prod_{l\neq j, \ l=-m}^{m}\frac{D_\tau-sl I}{sj-sl}.
\end{equation}
Now we set a proof of Theorem 4:
\begin{proof}
Let $x\in\R^3$, since the eigenvalues of $Q_1(x)=d\tau_m(x)$ are given in (\ref{eigenvalues of tau}), its characteristic polynomial  is
\begin{equation}
p_{Q_1}(\lambda)=\lambda\prod_{j=1}^{m}(\lambda^2+j^2|x|^2).
\end{equation}
By Cayley-Hamilton theorem, $p_{Q_1}(Q_1(x))=0$. Using that the symmetrization map, mentioned in the introduction, sends $|x|^2$ to $\Delta$ and $Q_1$ to $D_\tau$, it follows that 
\begin{equation}\label{charpol}
p_{D_\tau}(\lambda):=\lambda\prod_{j=1}^{m}(\lambda^2I+j^2\Delta)
\end{equation}
vanish at $D_\tau$.
\\ \\
We define $\Phi_{s,j}:=d_{\tau_m}D_{s,j}\varphi_s I$.
Since $\Delta \otimes I$ commutes with $D_\tau$, $\Phi_{s,j}$ is an eigenfunction of $\Delta \otimes I$ with eigenvalue $-s^2$.
Also,
it is an eigenfunction of $D_\tau$ with eigenvalue $sj$ because
\begin{align*}
(D_\tau-sjI)\left[ \prod_{l\neq j, \ l=-m}^{m} (D_\tau-sl I) \right]\varphi_s I 
&=D_\tau\left[\prod_{ \ l=0}^{m} (D_\tau^2+l^2\Delta \otimes I)\right]\varphi_s I \\
&= 0
\end{align*}
where the first equality holds from the fact that $\Delta \varphi_s = -s^2\varphi_s$ and the last equality follows since $p_{D_\tau}(D_\tau)=0$.
\\ \\
Finally, as $D_{s,j}\in \mathbb{D}_{\tau_m}$ and $\varphi_s$ is a radial scalar function, $\Phi_{s,j}$ satisfies (\ref{Kinv}). 
\qed
\end{proof}
We want to remark that there is another form to obtain the same formula of the $\tau$-spherical functions and it is a consequence of the following proposition. 
Let $\varphi_s$ be as above and consider the space
\begin{equation}
\mathbb{D}_{\tau_m}\varphi_s:=\{D(\varphi_s I): D\in \mathbb{D}_{\tau_m}\}.
\end{equation}
\begin{proposition}
$\mathbb{D}_{\tau_m}\varphi_s$ is a $(2m+1)$-dimensional vector space generated by
\begin{equation}
\mathbb{D}_{\tau_m}\varphi_s=<\{\varphi_s I, D_\tau(\varphi_sI),...,D_\tau^{2m}(\varphi_sI)\}>.
\end{equation}
\end{proposition}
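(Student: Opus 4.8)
The plan is to trap $\mathbb{D}_{\tau_m}\varphi_s$ between the explicit span $W:=\langle\{\varphi_s I,\,D_\tau(\varphi_s I),\dots,D_\tau^{2m}(\varphi_s I)\}\rangle$ and the $(2m+1)$-dimensional space $V^s$ of Section~\ref{sec:2.1}, and then to show that both ends of this chain have dimension $2m+1$. First I would record that the classical spherical function $\varphi_s$ is precisely $f_0^s$: it is the $j=0$ member of the family $\{f_j^s\}$, which for $j=0$ is by construction the spherical function of the Gelfand pair $(SO(3)\ltimes\R^3,SO(3))$ with Laplacian eigenvalue $-s^2$. Hence $\varphi_s I=f_0^sQ_0\in V^s$. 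Since $V^s$ is invariant under $D_\tau$ (the lemma of Section~\ref{sec:2.1}) and under $\Delta\otimes I$ (each $f_j^sQ_j$ being an eigenfunction of $\Delta\otimes I$ with eigenvalue $-s^2$), and since $\mathbb{D}_{\tau_m}$ is generated by $\Delta\otimes I$ and $D_\tau$ by Theorem~1, the whole algebra $\mathbb{D}_{\tau_m}$ preserves $V^s$. Therefore $W\subseteq\mathbb{D}_{\tau_m}\varphi_s\subseteq V^s$, and in particular $\dim\mathbb{D}_{\tau_m}\varphi_s\le 2m+1$.

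Next I would exhibit $2m+1$ linearly independent vectors inside $W$. By Theorem~4 we have $\Phi_{s,j}=(2m+1)D_{s,j}(\varphi_s I)$, where $D_{s,j}=\prod_{l\neq j,\,l=-m}^{m}\frac{D_\tau-slI}{sj-sl}$ is a polynomial of degree $2m$ in $D_\tau$; consequently $\Phi_{s,j}\in W$ for every $j\in\{-m,\dots,m\}$. By Theorem~3 each $\Phi_{s,j}$ is nonzero (indeed $\Phi_{s,j}(0)=I$) and is an eigenfunction of $D_\tau$ with eigenvalue $sj$, and these $2m+1$ eigenvalues are pairwise distinct because $s\neq 0$. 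A standard argument — pick a dependence relation among the $\Phi_{s,j}$ of minimal length and apply $D_\tau-sj_0I$ for some $j_0$ occurring in it to produce a shorter one — then shows that $\{\Phi_{s,j}\}_{j=-m}^{m}$ is linearly independent, so $\dim W\ge 2m+1$.

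Comparing the two estimates, the inclusions $W\subseteq\mathbb{D}_{\tau_m}\varphi_s\subseteq V^s$ must all be equalities and each of these spaces has dimension exactly $2m+1$; in particular $\{D_\tau^k(\varphi_s I)\}_{k=0}^{2m}$ is a basis of $\mathbb{D}_{\tau_m}\varphi_s$, which is the claim. The only step that is more than bookkeeping is the finite-dimensionality (the upper bound), and it could also be obtained without reference to $V^s$: for an arbitrary $D\in\mathbb{D}_{\tau_m}$ one rewrites $D(\varphi_s I)$ as a polynomial in $D_\tau$ alone, using that $\Delta\otimes I$ commutes with $D_\tau$ and acts on $\varphi_s I$ as the scalar $-s^2$, and then one invokes the identity $p_{D_\tau}(D_\tau)=0$ from \eqref{charpol}, with $\Delta$ replaced by $-s^2$, to express every $D_\tau^k(\varphi_s I)$ with $k>2m$ in terms of lower powers. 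I do not expect either route to meet a genuine obstacle: the substance of the proposition is entirely contained in the already-proved Theorems~1, 3 and~4.
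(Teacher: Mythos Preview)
Your argument is correct. The paper's own proof is a single sentence, ``It follows from Proposition~\ref{pol en Q1}'', so you have supplied far more detail than the authors do. Your alternative route sketched at the end --- reduce every $D(\varphi_sI)$ to a polynomial in $D_\tau$ alone using Theorem~1 and the scalar action of $\Delta$ on $\varphi_s$, then invoke $p_{D_\tau}(D_\tau)=0$ from (\ref{charpol}) to truncate at degree $2m$ --- is essentially what the paper has in mind, since Proposition~\ref{pol en Q1} is precisely the input to both Theorem~1 and the polynomial identity (\ref{charpol}).

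Your primary route, sandwiching $\mathbb{D}_{\tau_m}\varphi_s$ between $W$ and $V^s$ and then producing $2m+1$ independent vectors in $W$ via Theorems~3 and~4, is a genuinely different organization. It trades the purely algebraic reduction for the analytic fact that the $\Phi_{s,j}$ are eigenfunctions of $D_\tau$ with distinct eigenvalues. What this buys you is an explicit, self-contained verification of linear independence; the paper's one-line proof leaves that step implicit and only makes it visible in the paragraph \emph{after} the proposition, where the characteristic polynomial (\ref{charpol2}) is computed and seen to have simple roots. The cost is that you are invoking Theorems~3 and~4, which are logically downstream of the structural Proposition~\ref{pol en Q1} and somewhat heavier than what is strictly needed here.
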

\begin{proof}
It follows from Proposition 1.
\qed
\end{proof}
Now, let $\mathcal{B}_s:=\{D_\tau^{l}(\varphi_s I)\}_{l=0}^{2m}$ be an ordered basis of $\mathbb{D}_{\tau_m}\varphi_s$ and consider $[D_\tau]^{\mathcal{B}_s}_{\mathcal{B}_s}$ the matrix representation of $D_\tau$ with respect to $\mathcal{B}_s$. From (\ref{charpol}) and using the fact that 
$\Delta \varphi_s=-s^2\varphi_s$, the characteristic polynomial of $[D_\tau]^{\mathcal{B}_s}_{\mathcal{B}_s}$ is
\begin{equation}\label{charpol2}
p_{[D_\tau]^{\mathcal{B}_s}_{\mathcal{B}_s}}(\lambda)=\lambda\prod_{j=1}^{m}(\lambda^2-j^2s^2)=\prod_{j=-m}^{m}(\lambda-js).
\end{equation}
Thus, $[D_\tau]^{\mathcal{B}_s}_{\mathcal{B}_s}$  coincides with the  rational canonical form of $d\tau_m(isY_1)$ and 
its $2m+1$ eigenvalues are $\{sj\}_{j=-m}^m$. 
\\ \\
As any linear combination of the elements of $\mathcal{B}_s$ is an eigenfunction of $\Delta \otimes I$ with eigenvalue $-s^2$ and satisfies (\ref{Kinv}), in order to determine $\tau$-spherical functions we just have to calculate the eigenvectors of $[D_\tau]^{\mathcal{B}_s}_{\mathcal{B}_s}$. Once we calculate the extended form of the characteristic polynomial (\ref{charpol2}), the linear system to compute the eigenvectors from a matrix in a rational canonical form is very simple to solve. If we assume that, for a fix integer $-m \leq j\leq m$, $v_j=(v_0^j,...,v_{2m}^{j})$ is an eigenvector, then
\begin{equation}\label{func esf como op dif en una del par}
\sum_{l=0}^{2m}v_l^{j} D_\tau^{l}(\varphi_sI)
\end{equation} 
will be an eigenfunction of $D_\tau$. The condition that every $\tau$-spherical function evaluated at $0$ must be $I$,  determines the multiple of $v_j$ that we must choose in order to obtain a $\tau$-spherical function in (\ref{func esf como op dif en una del par}).
\\ \\
So we have proved the following:
\begin{corollary}\label{coro section fulvio}
For any $s\in\R_{>0}$ the vector space $\mathbb{D}_{\tau_m}\varphi_s$ is finite dimensional and coincides with the space generated by the $\tau$-spherical functions with eigenvalue $-s^2$ with respect to $\Delta \otimes I$:
\begin{equation}
<\{\Phi_{s,j}\}_{j=-m}^m>=\mathbb{D}_{\tau_m}\varphi_s.
\end{equation}
\end{corollary}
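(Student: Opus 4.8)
The plan is to obtain the statement as a short formal consequence of the preceding Proposition and of Theorem~4, so that essentially no new computation is needed. Finite-dimensionality is exactly the content of that Proposition, which moreover provides the explicit basis $\{D_\tau^{l}(\varphi_sI)\}_{l=0}^{2m}$; in particular $\dim_{\C}\mathbb{D}_{\tau_m}\varphi_s=2m+1$, and $\varphi_sI$ is a cyclic vector for $D_\tau$ acting on $\mathbb{D}_{\tau_m}\varphi_s$. Hence the only thing left to prove is the equality $\langle\{\Phi_{s,j}\}_{j=-m}^{m}\rangle=\mathbb{D}_{\tau_m}\varphi_s$.

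First I would record the inclusion $\langle\{\Phi_{s,j}\}\rangle\subseteq\mathbb{D}_{\tau_m}\varphi_s$. Since each $D_{s,j}$ defined in (\ref{D_s,j}) is a polynomial in $D_\tau$ with scalar coefficients it lies in the algebra $\mathbb{D}_{\tau_m}$, so Theorem~4 gives $\Phi_{s,j}=(2m+1)D_{s,j}(\varphi_sI)\in\mathbb{D}_{\tau_m}\varphi_s$. For the reverse inclusion it suffices, by the dimension count, to show that the $2m+1$ functions $\Phi_{s,-m},\dots,\Phi_{s,m}$ are linearly independent. Here I would use that $\mathbb{D}_{\tau_m}\varphi_s$ is $D_\tau$-stable (because $\mathbb{D}_{\tau_m}$ is an algebra) and that, by (\ref{charpol2}), the restriction of $D_\tau$ to it has characteristic polynomial $\prod_{j=-m}^{m}(\lambda-js)$ with $2m+1$ distinct roots; hence $D_\tau$ is diagonalizable there with one-dimensional eigenlines, one $sj$-eigenline for each $j$. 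On this space $D_{s,j}$ is precisely the Lagrange spectral projector onto the $sj$-eigenline, so $\Phi_{s,j}$ is the component of the cyclic vector $\varphi_sI$ along that line; cyclicity forces this component to be nonzero, for otherwise $\varphi_sI$ would lie in the $D_\tau$-invariant span of the remaining eigenlines, a proper subspace. Thus the $\Phi_{s,j}$ are nonzero and lie in pairwise distinct eigenlines, so they are linearly independent, hence a basis, and the equality follows. Alternatively one may invoke $\sum_{j=-m}^{m}D_{s,j}=I$ (Lagrange interpolation of the constant $1$), so that $\varphi_sI=\frac{1}{2m+1}\sum_{j}\Phi_{s,j}$ and, applying powers of $D_\tau$, the entire basis $\{D_\tau^{l}(\varphi_sI)\}$ is recovered from the $\Phi_{s,j}$.

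The argument is entirely formal once the simplicity of the spectrum in (\ref{charpol2}) is available; that is the only genuine input, and it was already secured in Section~\ref{sec:2.3} from the fact that $d\tau_m(Y_1)$ has the $2m+1$ distinct eigenvalues $\{ij\}_{j=-m}^{m}$. So there is no real obstacle inside this corollary — the substance has been absorbed into Theorem~4 and the preceding Proposition — and what remains is bookkeeping with a diagonalizable operator on a $(2m+1)$-dimensional space.
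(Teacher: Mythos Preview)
Your proposal is correct and follows essentially the same route as the paper: both arguments rest on the preceding Proposition (giving $\dim\mathbb{D}_{\tau_m}\varphi_s=2m+1$), on the computation (\ref{charpol2}) showing that $D_\tau$ has $2m+1$ simple eigenvalues on this space, and on Theorem~4 identifying the $\Phi_{s,j}$ with the corresponding eigenvectors. You have simply made the bookkeeping (inclusion, dimension count, nonvanishing via cyclicity) more explicit than the paper, which phrases the same content as ``the eigenvectors of $[D_\tau]^{\mathcal{B}_s}_{\mathcal{B}_s}$, suitably normalized, are the $\Phi_{s,j}$'' and then declares the corollary proved.
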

In \cite{Fulvio} Corollary 3.3, F. Ricci and A. Samanta have proved that if $(G,K,\tau)$ is a commutative triple for some $\tau$, with $G/K$ connected, then $(G,K)$ is a Gelfand pair. When $G=K\ltimes N$ they have also proved that every $\tau$-spherical function is a differential operator in $(\mathbb{D}({N})\otimes End(V_\tau))^K$ applied to a classical spherical function of the Gelfand pair $(K\ltimes N,K)$ (personal communication). Thus Corollary \ref{coro section fulvio} is a particular case of this general result. 

\subsection{Relations among the different methods and positiveness}
\label{sec:2}
In the classical theory of the Gelfand pair $(SO(n)\ltimes \R^n, SO(n))$, the spherical functions (scalar type) are parametrized by $s\in\R_{>0}$ and can be calculated as
\begin{equation}
\varphi_s(x):=\int_{S^{n-1}}e^{-is<x,\xi>}d\sigma(\xi) \  \ \forall x\in \R^n.
\end{equation}
So, for each $s\in\R_{>0}$, $\varphi_s$ is the classical Fourier transform 
of the normalized $O(n)$-invariant measure $\sigma_s$ of the sphere in $\R^n$ centered at  the origin and with radius $s$.
\\ \\
In analogy to  the classical relation
$\widehat{xf}=i\partial_x \widehat{f}$ (where $\ \widehat{} \ $ is the classical Fourier transform), the formul\ae $\ $ (\ref{Pj}) and (\ref{D_s,j}) yield
\begin{equation}\label{relacion}
\widehat{P_j(\frac{.}{s})\sigma_s}(x)=D_{s,j}\varphi_s(x), \text{ for all } x\in\R^3.
\end{equation}
Then,
the relation between the methods given in the sections ($3.2$) and ($3.3$) to obtain $\tau$-spherical functions is given by the classical Fourier transform.  
\\ \\
Finally, we want to remark that, for all $s\in\R_{>0}$, the functions $f_0^s$ given in section $3.1$ are the classical spherical functions $\varphi_s$ of the Gelfand pair $(SO(3)\ltimes\R^3,SO(3))$ with eigenvalue $-s^2$ with respect to the Laplacian operator. For completeness, we just want to mention that the relation among sections $3.1$ and $3.3$ is given by the family of changes of basis between $\mathcal{B}^s$ and $\mathcal{B}_s$, for $s\in\R_{>0}$. The differential relation (\ref{differential rel}) of  the functions $f_j^s$ and the fact that every polynomial $Q_j$ is a polynomial on $Q_1$ with coefficients on $\C[|x|^2]$ (Proposition \ref{pol en Q1}) is connected to these changes of basis. 

\section{The $\tau$-spherical Fourier transform and the inversion formula}
\label{sec:3}
Let $F\in L^1_{\tau_m}(\R^3)$. For a fixed $x\in \R^3$, $F(x)$ commutes with ${\tau_m}_|{_{K_x}}$ and then, by Schur's Lemma, it can be decomposed as a direct sum
\begin{equation}
F(x)=\sum_{j=-m}^{m}{\beta_j}(x)P_j(x)
\end{equation}
where ${\beta_j}(x)=Tr(F(x)P_j(x))=Tr(F(x)[P_j(x)]^{*})$ are integrable radial scalar functions.
\\ \\
The usual Fourier transform (computed componentwise) of $F$ preserves the relation of $K$-invariance 
$\widehat{F}(k\cdot y)=\tau_m(k)\widehat{F}(y)\tau_m(k^{-1})$ and 
then $\widehat{F}$ is decomposed as 
\begin{equation}\label{decom hat F}
\widehat{F}=\sum_{j=-m}^{m}h_jP_j
\end{equation}
where  $h_j=Tr(\widehat{F}P_j)$.
Fix $x\in \R^3$, let $k\in SO(3)$ be such that $k\cdot e_1=\frac{x}{|x|}$. We observe
\begin{align*}
h_j(x)&=Tr(\widehat{F}(x)P_j(x))\\
&=Tr(\widehat{F}(x)P_j(\frac{x}{|x|}))\\
&=Tr(\widehat{F}(|x|k\cdot e_1)P_j(k\cdot e_1))\\
&=Tr(\tau_m(k)\widehat{F}(|x|e_1)\tau_m(k)\tau_m(k)^tP_j(e_1)\tau_m(k)^t)\\
&=Tr(\widehat{F}(|x|e_1)P_j(e_1)).
\end{align*} 
Therefore, each $h_j$ is a radial function. In addition, since $\widehat{F}\in C_0$ and $P_j(e_1)$ is a constant matrix, it holds that $h_j\in C_0$.  
\\ \\ 
We denote by $\mathcal{F}$ the $\tau_m$-spherical Fourier transform defined by  
\begin{equation*}
\mathcal{F}(F)(\Phi_{s,j}):=\frac{1}{d_{\tau_m}}\int_{\R^3}Tr[F(x)\Phi_{s,j}(-x)] \ dx=
\frac{1}{d_{\tau_m}}\int_{\R^3}Tr[F(x)[\Phi_{s,j}(x)]^{*}] \ dx.
\end{equation*}
From (\ref{Phi j vs -j}), 
\begin{equation*}
\mathcal{F}(F)(\Phi_{s,j})=\frac{1}{d_{\tau_m}}\int_{\R^3}Tr[F(x)\Phi_{s,-j}(x)] \ dx
\end{equation*}
and using the integral formula of the $\tau_m$-spherical functions, we get
\begin{equation*}
\mathcal{F}(F)(\Phi_{s,j})=\int_{\R^3}Tr[F(x)\int_{S^2}e^{-is<x,\xi>}P_{-j}(\xi) \ d\sigma(\xi)] \ dx.
\end{equation*} 
For an arbitrary $\xi\in S^2$ there is an element $k_\xi\in K$ such that $\xi=k_{\xi}\cdot e_1$, then using (\ref{Pj inv}) for $P_{-j}$ and (\ref{Kinv}) for $F$,
\begin{equation*}
\mathcal{F}(F)(\Phi_{s,j})=\int_{\R^3}Tr[P_{-j}(e_1)\int_{SO(3)}F(k^{-1}\cdot x)e^{-is<x,k\cdot e_1>} \ dk] \ dx.
\end{equation*} 
Making a change of variables when we integrate on $\R^3$,
it holds 
\begin{align*}
\mathcal{F}(F)(\Phi_{s,j})&=Tr[P_{-j}(e_1)\int_{\R^3}F(x)e^{-is<x,e_1>} \ dx]\\
&=Tr[P_{-j}(e_1)\widehat{F}(se_1)]\\
&=h_{-j}(s).
\end{align*}
\begin{theorem}
Let $F\in L^{1}_\tau(\R^3)$ be such that its classical Fourier transform  $\widehat{F}$ is integrable, then
\begin{equation}
F(x)=\sum_{j=-m}^{m}\int_0^{\infty}\mathcal{F}(F)(\Phi_{r,j}) \ \Phi_{r,j}(x) \ r^2 \ dr.
\end{equation}
\end{theorem}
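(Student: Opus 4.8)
The plan is to reduce the matrix inversion formula to the classical Fourier inversion formula on $\R^3$, using the spectral decomposition $F(x)=\sum_{j} \beta_j(x) P_j(x)$ and the computation $\mathcal{F}(F)(\Phi_{r,j}) = h_{-j}(r)$ already carried out above, where $\widehat F = \sum_{j} h_j P_j$. Since $\widehat F$ is assumed integrable, classical Fourier inversion in $\R^3$ gives
\begin{equation*}
F(x) = \frac{1}{(2\pi)^3}\int_{\R^3} e^{i\langle x,\eta\rangle}\widehat F(\eta)\, d\eta,
\end{equation*}
computed componentwise (with whatever normalization of $\widehat{\cdot}$ is being used consistently; I will fix the normalization so that $\varphi_s = \widehat{\sigma_s}$ as in Section 3.4). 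First I would pass to polar coordinates $\eta = r\xi$ with $r>0$, $\xi\in S^2$, writing $d\eta = r^2\, dr\, d\tilde\sigma(\xi)$ for the unnormalized surface measure, and substitute the decomposition $\widehat F(r\xi) = \sum_{j=-m}^m h_j(r\xi) P_j(r\xi) = \sum_j h_j(r) P_j(\xi)$, using that each $h_j$ is radial (shown above) and that $P_j(r\xi)=P_j(\xi)$ by \eqref{Pj angular}.

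Next I would interchange the sum over $j$ with the integrals and recognize the inner integral over $S^2$ as essentially $\Phi_{r,j}(x)$: indeed, up to the normalizing constant relating $\sigma$ (normalized) and $\tilde\sigma$ (total mass $4\pi$) and the factor $d_{\tau_m}=2m+1$, formula \eqref{formulaintegral} gives
\begin{equation*}
d_{\tau_m}\int_{S^2} e^{-ir\langle x,\xi\rangle} P_j(\xi)\, d\sigma(\xi) = \Phi_{r,j}(x).
\end{equation*}
Here I must be careful about the sign in the exponential: the classical inversion integral has $e^{+i\langle x,\eta\rangle}$, whereas $\Phi_{r,j}$ is defined with $e^{-is\langle x,\xi\rangle}$; I would absorb this by using \eqref{Phi j vs -j}, $\Phi_{r,-j}(x)=\Phi_{r,j}(-x)$, together with $P_{-j}(\xi)=P_j(-\xi)$ from \eqref{j vs -j}, so that after relabeling $j\mapsto -j$ in the sum the exponential sign matches and $h_j$ becomes $h_{-j}$. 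Then the coefficient of $\Phi_{r,j}(x)$ in the resulting expression is exactly $h_{-j}(r)$, which by the computation preceding the theorem equals $\mathcal{F}(F)(\Phi_{r,j})$. Collecting the constants from $(2\pi)^{-3}$, the ratio $\tilde\sigma/\sigma = 4\pi$, and $d_{\tau_m}^{-1}$, one checks they combine to give precisely the $r^2\,dr$ weight with coefficient $1$ as stated; this bookkeeping is the one place where the chosen normalization of $\widehat{\cdot}$ must be tracked consistently with Section 3.4, where $\varphi_s=\widehat{\sigma_s}$.

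I would present the argument as the chain of equalities
\begin{align*}
F(x) &= \text{(classical Fourier inversion, componentwise)} \\
&= \text{(polar coordinates } \eta = r\xi\text{, plug in } \widehat F = \textstyle\sum_j h_j P_j) \\
&= \sum_{j=-m}^m \int_0^\infty h_{-j}(r)\,\Phi_{r,j}(x)\, r^2\, dr \\
&= \sum_{j=-m}^m \int_0^\infty \mathcal{F}(F)(\Phi_{r,j})\,\Phi_{r,j}(x)\, r^2\, dr,
\end{align*}
where the absolute convergence needed to justify Fubini at each step comes from $\widehat F \in L^1(\R^3)$ (which controls $\sum_j |h_j(r)| r^2$ being integrable, since the $P_j(\xi)$ have bounded operator norm) and from $\|\Phi_{r,j}(x)\| \le d_{\tau_m}$. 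The main obstacle is not conceptual but the careful handling of the sign flip and the normalization constants: getting the exponential $e^{-ir\langle x,\xi\rangle}$ of $\Phi_{r,j}$ to align with the $e^{+i\langle x,\eta\rangle}$ of classical inversion forces the use of the parity relations \eqref{j vs -j}, \eqref{Phi j vs -j} and the reindexing $j\mapsto -j$, and one must verify that the constants $(2\pi)^{-3}$, $4\pi$ (surface area of $S^2$), and $1/d_{\tau_m}$ indeed cancel to leave the clean $r^2\,dr$ formula — a routine but error-prone computation that I would do once with a fixed normalization convention stated explicitly.
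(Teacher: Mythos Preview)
Your proposal is correct and follows essentially the same route as the paper: classical Fourier inversion componentwise, polar coordinates, insertion of the decomposition $\widehat F=\sum_j h_j P_j$, recognition of the angular integral as $\Phi_{r,j}(-x)$, and the reindexing $j\mapsto -j$ via the parity relations. The paper's proof is in fact terser than yours---it silently works in a normalization where the inversion constant and the surface-measure normalization cancel, and it does not spell out the Fubini justification---so your extra care with constants and absolute convergence only adds rigor, not a different idea.
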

\begin{proof}
 Using the classical inversion formula, 
we have that
\begin{align*}
F(x) &= \int_{\mathbb{R}^3}\widehat{F}(y) \ e^{i<x,y>} \ dy\\
&=\int_0^{\infty}\int_{S^2}\widehat{F}(r\xi) \ e^{i<x,r\xi>} \ d\sigma(\xi) \ r^2 \ dr\\
&=\int_0^{\infty}\int_{S^2}\sum_{j=-m}^{m}Tr[P_j(\xi)\widehat{F}(r\xi)] \ P_j(\xi) \ e^{i<x,r\xi>} \ d\sigma(\xi) \ r^2 \ dr\\
&=\sum_{j=-m}^{m}\int_0^{\infty}(Tr[P_j(e_1)\widehat{F}(re_1)]) \ (\int_{S^2}P_j(\xi)e^{i<x,r\xi>} \ d\sigma(\xi)) \ r^2 \ dr\\
&=\sum_{j=-m}^{m}\int_0^{\infty}Tr[P_j(e_1)\widehat{F}(re_1)] \ \Phi_{r,j}(-x) \ r^2 \ dr\\
&=\sum_{j=-m}^{m}\int_0^{\infty}\mathcal{F}(F)(\Phi_{r,-j}) \ \Phi_{r,-j}(x) \ r^2 \ dr\\
&=\sum_{j=-m}^{m}\int_0^{\infty}\mathcal{F}(F)(\Phi_{r,j}) \ \Phi_{r,j}(x) \ r^2 \ dr.
\end{align*}
\qed
\end{proof}
Therefore the Plancherel measure is the product measure of the Plancherel measure associated to the Gelfand pair and a finite sum of deltas.
\\ \\
The inversion theorem allows us to prove a decomposition of regular matrix-valued functions in accordance with the main theorem proved in \cite{Schwarz}. 
\\ \\
Let $V$ be a finite dimensional vector space.  
An $End(V)$-valued function $F$ on $\R^n$ is said to be a \textit{Schwartz function}  if every such matrix entry defines a scalar Schwartz function on $\R^n$ and we denote $F\in \mathcal{S}(\R^n, End(V))$. 
In particular, 
consider a function $F$ in $\mathcal{S}(\R^3, End(V_{\tau_m}))$ satisfying  (\ref{Kinv}) and set as in (\ref{decom hat F}) $\widehat{F}=\sum_{j=-m}^{m}h_jP_j$.
Notice that, from the identity $h_j(|x|)=Tr(\widehat{F}(|x|e_1)P_j(e_1))$, the scalar functions $h_j$ are radial Schwartz functions on $\R^3$, for all $-m\leq j\leq m$,  since the classical Fourier transform of $F$ is a Schwartz function and $P_j(e_1)$ is a constant matrix. Moreover, from the identity $\mathcal{F}(F)(\Phi_{s,j})=h_{-j}(s)$, it holds that $\mathcal{F}(F)$ defines a Schwartz function as a function on the variable $s\in\R_{>0}$.
\begin{corollary}
Let $F\in \mathcal{S}(\R^3, End(V_{\tau_m}))$ such that  (\ref{Kinv}) holds. Then, it can be written as
\begin{equation}
F(x)=\sum_{k=0}^{2m} g_k(x)Q_k(x)
\end{equation}
for some infinitely differentiable scalar functions $g_k$.
\end{corollary}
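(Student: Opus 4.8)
The plan is to feed the Inversion Theorem the explicit expansion of the $\tau_m$-spherical functions supplied by Theorem 2 (equivalently Theorem 3) and read off the decomposition. Since $F$ is Schwartz its classical Fourier transform is again Schwartz, hence integrable, so the Inversion Theorem applies. First I would recall from the computation preceding the Inversion Theorem that $\mathcal{F}(F)(\Phi_{r,j})=h_{-j}(r)$, where each $h_j$ is a radial Schwartz function on $\R^3$; I write $h_j(r)$ for its radial profile. Next, using Theorem 2 (in the $j\in\{-m,\dots,m\}$ labelling of Theorem 3, since both theorems describe the same $2m+1$ non-trivial spherical functions) together with the scaling relations $f_k^{\,r}(|x|)=f_k^{\,1}(r|x|)$ and $v_k^{(r,j)}=r^k v_k^{(1,j)}$ recorded just before Theorem 2, I would write
\begin{equation*}
\Phi_{r,j}(x)=\sum_{k=0}^{2m} r^k\, v_k^{(1,j)}\, f_k^{\,1}(r|x|)\, Q_k(x),\qquad v_0^{(1,j)}=1 .
\end{equation*}
Substituting this into the Inversion Theorem and interchanging the $r$-integral with the finite sums over $k$ and $j$ yields $F(x)=\sum_{k=0}^{2m} g_k(x) Q_k(x)$ with
\begin{equation*}
g_k(x):=\sum_{j=-m}^{m} v_k^{(1,j)}\int_0^\infty h_{-j}(r)\, r^{k+2}\, f_k^{\,1}(r|x|)\, dr ,
\end{equation*}
a radial function on $\R^3$.

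The interchange is routine: each $f_k^{\,1}$ is bounded on $[0,\infty)$, being the normalized Bessel profile $\Gamma(\tfrac32+k)J_{k+\frac12}(\rho)/(\rho/2)^{k+\frac12}$, which is continuous there and vanishes at infinity, and each $h_{-j}$ is Schwartz; hence the iterated sum/integral converges absolutely for every $x$ and Fubini applies. Thus the only substantive point — and the step I expect to be the main obstacle — is showing that each $g_k$ is infinitely differentiable on $\R^3$, which amounts to justifying differentiation under the integral sign to all orders.

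For that I would argue as follows. From the series $J_\alpha(z)=\sum_{l\ge 0}\frac{(-1)^l}{l!\,\Gamma(l+\alpha+1)}(z/2)^{2l+\alpha}$ one sees that $f_k^{\,1}(\rho)=\psi_k(\rho^2)$ for an entire function $\psi_k$; hence $f_k^{\,1}(r|x|)=\psi_k(r^2|x|^2)$ is a smooth function of $x\in\R^3$ for each fixed $r$. Moreover every derivative $\psi_k^{(N)}$ is bounded on $[0,\infty)$: applying $\tfrac{d}{d\rho}\big(J_\alpha(\rho)/\rho^\alpha\big)=-J_{\alpha+1}(\rho)/\rho^\alpha$ and $\tfrac{d}{dt}=\tfrac{1}{2\sqrt t}\tfrac{d}{d\rho}$ repeatedly, each $\psi_k^{(N)}$ is a constant multiple of a function of the form $J_{k+N+\frac12}(\sqrt t)/(\sqrt t)^{\,k+N+\frac12}$, which is continuous on $[0,\infty)$ and vanishes at infinity. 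By the chain rule this gives, for any multi-index $\alpha$ with $|\alpha|=N$, a pointwise bound of the form $|\partial_x^\alpha(f_k^{\,1}(r|x|))|\le C_N (1+r)^{2N}(1+|x|)^{N}$ valid for all $r\ge 0$ and $x\in\R^3$. Since $h_{-j}$ is Schwartz, $\int_0^\infty |h_{-j}(r)|(1+r)^{k+2+2N}\,dr<\infty$, so on every ball of $\R^3$ the integrand defining $g_k$ and all of its $x$-derivatives of order $N$ are dominated by a single integrable function of $r$. Smoothness being a local property, one may therefore differentiate under the integral to every order, concluding $g_k\in C^\infty(\R^3)$.

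Putting the pieces together gives $F=\sum_{k=0}^{2m} g_k Q_k$ with the $g_k$ smooth (and radial), as claimed. I would only need to be careful, in the write-up, that the two index conventions for the spherical functions (the $k\in\{0,\dots,2m\}$ of Theorem 2 and the $j\in\{-m,\dots,m\}$ of Theorem 3 and the Inversion Theorem) are matched consistently; everything else reduces to the boundedness of the Bessel profiles and the rapid decay of the $h_{-j}$.
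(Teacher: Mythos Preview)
Your proposal is correct and follows essentially the same route as the paper: feed the Inversion Theorem the expansion $\Phi_{r,j}=\sum_k v_k^{(r,j)} f_k^{\,r}\,Q_k$, use the scaling $v_k^{(r,j)}=r^k v_k^{(1,j)}$ to extract the same coefficient functions $g_k$, and then justify differentiation under the integral via the boundedness of the Bessel profiles together with the rapid decay of the $h_{-j}$. The paper's proof only spells out the dominated-convergence bound for the first derivative (using the differential relation $\tfrac{d}{dr}f_k^{\,s}/r=-s^2 f_{k+1}^{\,s}/(2k+3)$) and then asserts the higher-order case; your argument via $f_k^{\,1}(\rho)=\psi_k(\rho^2)$ with $\psi_k^{(N)}$ bounded is a cleaner way to handle all orders at once, but the underlying idea is the same.
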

\begin{proof}
Using the Inversion Theorem, the formula of the $\tau$-spherical functions given in section $3.1$ and (\ref{coordenadas de autovectores}), we get 
\begin{align*}
F(x)&=\sum_{j=-m}^{m}\int_0^\infty \mathcal{F}(F)(\Phi_{r,j}) \ \Phi_{r,j}(x) \ r^2 \ dr\\
&=\sum_{j=-m}^{m}\int_0^\infty h_{-j}(r) \ \left[\sum_{k=0}^{2m}v_{k}^{(r,j)}f_k(r|x|)Q_k(x)\right] \ r^2 \ dr\\
&=\sum_{k=0}^{2m} \ \left[\sum_{j=-m}^{m}\int_{0}^\infty v_{k}^{(r,j)} h_{-j}(r)f_k(r|x|) \ r^2 \ dr \right]  \ Q_k(x)\\
&=\sum_{k=0}^{2m} \ \left[\sum_{j=-m}^{m}\int_{0}^\infty v_{k}^{(1,j)} r^{k} h_{-j}(r)f_k(r|x|) \ r^2 \ dr \right]  \ Q_k(x)\\
&=\sum_{k=0}^{2m} \ \left[\sum_{j=-m}^{m}v_{k}^{(1,j)}\int_{0}^\infty   h_{-j}(r)f_k^{|x|}(r) \ r^{k+2} \ dr \right]  \ Q_k(x).
\end{align*}
We recall that the functions $f_k$ are bounded spherical functions associated to the Gelfand pairs $(SO(2k+3)\ltimes \R^{2k+3},SO(2k+3))$ with eigenvalue $-1$ with respect to the Laplacian operator (and $f_k^{|x|}(r)=f_k(r|x|)$ are bounded spherical functions associated to these Gelfand pairs with eigenvalue $-|x|^2$). Therefore, they are bounded by $1$ ($||f_k||_{\infty}=f_{k}(0)=1$). Using this  and the fact that the functions $h_j$ are Schwartz functions, it holds that the functions 
\begin{equation*}
g_k(x):=\sum_{j=-m}^{m}v_{k}^{(1,j)}\int_{0}^\infty  h_{-j}(r)f_k(r|x|)r^{k+2}dr
\end{equation*}
 are well defined, for all $0 \leq k\leq 2m$. 
\\ \\
Moreover, they are infinitely differentiable by the Lebesgue's dominated convergence Theorem.
Indeed, for the first derivatives, using (\ref{differential rel s}) we have that for each $-m \leq j\leq m$,
\begin{align*}
\frac{\partial}{\partial x_l}[ f_k(r|x|) h_{-j}(r)r^{k+2}]&=
-\frac{f_{k+1}(r|x|)}{2k+3}h_{-j}(r)r^{k+4}x_l,
\end{align*}
and they are integrable functions on the variable $r$.
\\ \\
Finally, and only as a remark, note that for each $0\leq k\leq 2m$, since $f_k^{|x|}$ is a scalar spherical function of the Gelfand pair $(SO(n)\ltimes \R^{n},SO(n))$ with $n=2k+3$, the expression 
\begin{equation}
\int_{0}^\infty h_{-j}(r)f_k^{|x|}(r)r^{k+2}dr=
\int_{0}^\infty  \frac{h_{-j}(r)}{r^k}f_k^{|x|}(r)r^{2k+2}dr
\end{equation}
is the spherical Fourier transform associated to that pair evaluated at the point $|x|$ of the function $\frac{h_{-j}(r)}{r^k}$ . In this way, each smooth function $g_k$ can be thought of as the spherical Fourier transform of a radial function on  $\R^{2k+3}$.
\qed
\end{proof}

\section{Appendix}
\label{sec:4}
Here we present the computation of the coefficients $a_j$ mentioned in section $3.1$.
\\ \\
In section 2 we deduced that every matrix-valued polynomial $Q_j$ ($1\leq j \leq 2m$) can be written as a monic polynomial on $Q_1$ with coefficients in $\C[|x|^2]$, i.e.
\begin{equation*}
Q_j = \left\{
        \begin{array}{ll}
            Q_1^j \ + \ b_j^1 \ r^2Q_1^{j-2} \ +...+ \ b_j^{j/2} \ r^jI & \text{ if } j \text{ is even} \\
            Q_1^j \ + \ b_j^1 \ r^2Q_1^{j-2} \ +...+ \ b_j^{(j-1)/2} \ r^{j-1}Q_1 & \text{ if } j \text{ is odd}
        \end{array}
    \right.
\end{equation*}
for some scalars $\{b_j^k\}_k$. We are only interested  in the coefficients $b_j^1$, so let us write 
\begin{equation}\label{Q_j como pot de Q_1}
Q_j =       Q_1^j \ + \ b_j^1 \ r^2Q_1^{j-2} \ +  \text{ lower order terms on } Q_1.
\end{equation}
Applying the operator $D_\tau$ to both sides we get
\begin{equation*}
a_jQ_{j-1}=D_\tau Q_j=D_\tau [Q_1^j]+2b_jQ_1^{j-1}+b_jr^2D_\tau [Q_1^{j-2}] + \text{ lower order terms on } Q_1.
\end{equation*}
Now we want to analyse $D_\tau [Q_1^j]$. By induction it holds
\begin{equation*}
D_\tau [Q_1^j]=\sum_{k=0}^{j-1}[\sum_{i=1}^3d\tau_m(Y_i)Q_1^kd\tau_m(Y_i)Q_1^{j-1-k}], 
\end{equation*}
and in order to make more readable the following computations we denote 
\begin{equation*}
T(k):=\sum_{i=1}^3d\tau_m(Y_i)Q_1^kd\tau_m(Y_i).
\end{equation*}
For $k=0$, $T(0)=\sum_{i=1}^3[d\tau_m(Y_i)]^2=c_{\tau_m}I$ the Casimir operator of $d\tau_m$.
\\ \\
For $k=1$, $T(1)=(1+c_{\tau_m})Q_1$. Indeed, as $T(1)
=\sum_{i=1}^3d\tau_m(Y_i)Q_1(x)d\tau_m(Y_i)$, adding ans substracting $\sum_{i=1}^3Q_1(x)d\tau_m(Y_i)d\tau_m(Y_i)$, we get
\begin{align*}
T(1)&=
\sum_{i=1}^3[d\tau_m(Y_i),Q_1(x)]d\tau_m(Y_i)+Q_1(x)\Omega_{\tau_m}\\
&=\sum_{i=1}^3(\sum_{j=1}^3x_j[d\tau_m(Y_i),d\tau_m(Y_j)])d\tau_m(Y_i)+c_{\tau_m}Q_1(x)\\
&=\sum_{i=1}^3(\sum_{j=1}^3x_jd\tau_m([Y_i,Y_j]))d\tau_m(Y_i)+c_{\tau_m}Q_1(x)\\
&=\sum_{i=1}^3x_id\tau_m(Y_i)+c_{\tau_m}Q_1(x)\\
&=Q_1(x)+c_{\tau_m}Q_1(x)
\end{align*} 
where $[\cdot \ ,\cdot]$ denotes the Lie bracket.
\begin{lemma} There is a recursive formula associated to $T$:
\begin{equation}\label{recursiva}
T(k+2)-Q_1T(k+1)=(k+2)Q_1^{k+2}-r^2\sum_{j=0}^kT(k-j)Q_1^j.
\end{equation}
\end{lemma}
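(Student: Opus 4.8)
The plan is to reduce the identity to two structural facts about the matrices $A_i:=d\tau_m(Y_i)$, $i=1,2,3$: first, that $\sum_i A_i^2=\Omega_{\tau_m}=c_{\tau_m}I$; and second, that from the explicit basis $\{Y_1,Y_2,Y_3\}$ of $so(3)$ fixed above one has $[A_i,A_j]=d\tau_m([Y_i,Y_j])=-\sum_k\epsilon_{ijk}A_k$, with $\epsilon$ the Levi--Civita symbol, so that $[A_i,Q_1(x)]=-\sum_l c_{il}(x)A_l$ where $c_{il}(x):=\sum_j\epsilon_{ijl}x_j$. I will also use the relation $\sum_i[A_i,Q_1]A_i=Q_1$, which was already extracted in the computation of $T(1)$ (since $T(1)=\sum_i[A_i,Q_1]A_i+Q_1\Omega_{\tau_m}=(1+c_{\tau_m})Q_1$). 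Write $Q:=Q_1$ throughout.

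First I would telescope the left-hand side. Using $A_iQ=QA_i+[A_i,Q]$ once,
\begin{equation*}
T(k+2)=\sum_i A_iQ\cdot Q^{k+1}A_i=Q\,T(k+1)+\sum_i[A_i,Q]\,Q^{k+1}A_i,
\end{equation*}
so it remains to evaluate $\sum_i[A_i,Q]Q^{k+1}A_i$. Moving the trailing $A_i$ to the left through $Q^{k+1}A_i=A_iQ^{k+1}-[A_i,Q^{k+1}]$ and invoking $\sum_i[A_i,Q]A_i=Q$ gives
\begin{equation*}
\sum_i[A_i,Q]Q^{k+1}A_i=Q^{k+2}-\sum_i[A_i,Q][A_i,Q^{k+1}].
\end{equation*}
Then the elementary Leibniz expansion $[A_i,Q^{k+1}]=\sum_{a=0}^{k}Q^a[A_i,Q]Q^{k-a}$ (proved by an immediate induction) converts the remaining term into $\sum_{a=0}^{k}\big(\sum_i[A_i,Q]Q^a[A_i,Q]\big)Q^{k-a}$.

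The core of the argument, and the step I expect to be the main obstacle, is the closed form
\begin{equation*}
\sum_{i=1}^3[A_i,Q]\,Q^a\,[A_i,Q]=r^2\,T(a)-Q^{a+2}.
\end{equation*}
To establish it I would substitute $[A_i,Q]=-\sum_l c_{il}A_l$, so that the left-hand side equals $\sum_{l,p}\big(\sum_i c_{il}c_{ip}\big)A_lQ^aA_p$, and then use the contraction $\sum_i c_{il}(x)c_{ip}(x)=\sum_{j,j'}\big(\delta_{jj'}\delta_{lp}-\delta_{jp}\delta_{lj'}\big)x_jx_{j'}=r^2\delta_{lp}-x_lx_p$, which comes from $\sum_i\epsilon_{ijl}\epsilon_{ij'p}=\delta_{jj'}\delta_{lp}-\delta_{jp}\delta_{lj'}$. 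Plugging this in yields $r^2\sum_l A_lQ^aA_l-(\sum_l x_lA_l)Q^a(\sum_p x_pA_p)=r^2T(a)-QQ^aQ=r^2T(a)-Q^{a+2}$. The delicate point is keeping track of signs: one must first check from the matrices $Y_1,Y_2,Y_3$ that the structure constants are $[Y_i,Y_j]=-\epsilon_{ijk}Y_k$, the same sign already implicitly used in the $T(1)$ computation above.

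Assembling the pieces, $\sum_i[A_i,Q][A_i,Q^{k+1}]=\sum_{a=0}^{k}\big(r^2T(a)-Q^{a+2}\big)Q^{k-a}=r^2\sum_{a=0}^{k}T(a)Q^{k-a}-(k+1)Q^{k+2}$, and therefore
\begin{equation*}
T(k+2)-Q_1T(k+1)=Q^{k+2}-\Big(r^2\sum_{a=0}^{k}T(a)Q^{k-a}-(k+1)Q^{k+2}\Big)=(k+2)Q_1^{k+2}-r^2\sum_{j=0}^{k}T(k-j)Q_1^{j},
\end{equation*}
the last step being the reindexing $j=k-a$. This is precisely (\ref{recursiva}).
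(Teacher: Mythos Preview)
Your argument is correct. Both proofs rest on the same core identity $\sum_i[A_i,Q_1]Q_1^{a}[A_i,Q_1]=r^2T(a)-Q_1^{a+2}$; the paper obtains it by expanding the three brackets $[d\tau_m(Y_i),Q_1]$ explicitly, and you obtain it through the Levi--Civita contraction $\sum_i\epsilon_{ijl}\epsilon_{ij'p}=\delta_{jj'}\delta_{lp}-\delta_{jp}\delta_{lj'}$, which is the same computation in index notation.

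The organization differs, however. The paper expands \emph{both} commutators in $\sum_i[A_i,Q_1]Q_1^{k}[A_i,Q_1]$ to reach the second-order recurrence
\[
T(k+2)-Q_1T(k+1)-T(k+1)Q_1+Q_1T(k)Q_1=-r^2T(k)+Q_1^{k+2},
\]
then pauses to prove inductively that each $T(k)$ commutes with $Q_1$, and only afterwards telescopes down to (\ref{recursiva}). You instead peel off a single factor $A_iQ_1=Q_1A_i+[A_i,Q_1]$, use the auxiliary relation $\sum_i[A_i,Q_1]A_i=Q_1$ extracted from the $T(1)$ computation, and then apply the Leibniz rule $[A_i,Q_1^{k+1}]=\sum_a Q_1^{a}[A_i,Q_1]Q_1^{k-a}$; the core identity inserted term by term gives the telescoped sum at once. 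Your route is a bit more economical in that it never needs the commutativity of $T(k)$ with $Q_1$, whereas the paper's two-sided expansion makes that intermediate fact unavoidable before one can simplify.
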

\begin{proof}
We start by calculating $\sum_{i=0}^3[d\tau_m(Y_i),Q_1]Q_1^k[d\tau_m(Y_i),Q_1]$ in two different ways
\begin{align*}
\sum_{i=0}^3[d\tau_m(Y_i),&Q_1]Q_1^k[d\tau_m(Y_i),Q_1] =\\
&= \sum_{i=0}^3(d\tau_m(Y_i)Q_1-Q_1d\tau_m(Y_i))Q_1^k(d\tau_m(Y_i)Q_1-Q_1d\tau_m(Y_i))\\
&=T(k+1)Q_1-T(k+2)-Q_1T(k)Q_1+Q_1T(k+1)
\end{align*}
and
\begin{align*}
\sum_{i=0}^3[d\tau_m(Y_i),&Q_1]Q_1^k[d\tau_m(Y_i),Q_1] =\\
&=(x_3d\tau_m(Y_2)-x_2d\tau_m(Y_3))Q_1^k(x_3d\tau_m(Y_2)-x_2d\tau_m(Y_3))+\\
&+(x_1d\tau_m(Y_3)-x_3d\tau_m(Y_1))Q_1^k(x_1d\tau_m(Y_3)-x_3d\tau_m(Y_1))+ \\
&+(x_2d\tau_m(Y_1)-x_1d\tau_m(Y_2))Q_1^k(x_2d\tau_m(Y_1)-x_1d\tau_m(Y_2))\\
&=\sum_{i=0}^3x_i^2\sum_{j=0}^3 d\tau_m(Y_j)Q_1^kd\tau_m(Y_j)-\sum_{i,j}x_ix_jd\tau_m(Y_i)Q_1^kd\tau_m(Y_j)\\
&=r^2T(k)-\sum_{i=0}^3x_id\tau_m(Y_i)Q_1^k\sum_{j=0}^3x_jd\tau_m(Y_j)\\
&=r^2T(k)-Q_1^{k+2} .          
\end{align*}
Therefore,
\begin{equation*}
T(k+2)-Q_1T(k+1)-T(k+1)Q_1+Q_1T(k)Q_1=-r^2T(k)+Q_1^{k+2}.
\end{equation*}
Since we know the first two values of $T$, from the last recursion formula we can deduce that $T(k)$ is a polynomial on $r^2$ and $Q_1$, and therefore  $T(k)$ commutes with $Q_1$. This allows us to rewrite the preceding expression as
\begin{equation*}
T(k+2)-Q_1T(k+1)=Q_1(T(k+1)-Q_1T(k))-r^2T(k)+Q_1^{k+2}
\end{equation*}
and from here we get
\begin{equation*}
T(k+2)-Q_1^{k+1}T(1)=Q_1T(k+1)-Q_1^{k+2}T(0)-r^2\sum_{j=0}^kQ_1^jT(k-j)+(k+1)Q_1^{k+2}.
\end{equation*}
Finally, replacing 
$T(0)$ and $T(1)$, we obtain (\ref{recursiva}).
\qed
\end{proof}
As $T(k)$ is a polynomial on  $r^2$ and $Q_1$, we can express its first terms by 
\begin{equation}\label{T}
T(k)=\gamma_k^0Q_1^k+\gamma_k^1r^2Q_1^{k-2} + \text{ lower order terms on } Q_1.
\end{equation}
Comparing the principal coefficients on (\ref{recursiva}) and (\ref{T}) we obtain,
\begin{equation*}
\gamma_{k+2}^0-\gamma_{k+1}^0=k+2 \ \ \ \text{ and } \ \ \  \gamma_1^0-\gamma_0^0=(1+c_{\tau_m})-c_{\tau_m}=1.
\end{equation*}
Then,
\begin{equation*}
\gamma_j^0-\gamma_0^0=\sum_{j=1}^k(\gamma_j^0-\gamma_{j-1}^0)=\sum_{j=1}^kj={k+1 \choose 2 }=\frac{k(k+1)}{2}
\end{equation*}
and so
\begin{equation*}
\gamma_j^0=c_{\tau_m}+{k+1 \choose 2 } \ \ \text{ and if } k=0 \text{, } \ \gamma_0^0=c_{\tau_m} 
\end{equation*}
As a consequence we can write
\begin{equation*}
D_\tau Q_1^j=\gamma Q_1^{j-1}+ \text{ lower order terms on } Q_1.
\end{equation*}
with
\begin{align*}
\gamma&=\sum_{k=0}^{j-1}\gamma_k^0\\
&=c_{\tau_m}+\sum_{k=1}^{j-1}(c_{\tau_m}+{k+1 \choose 2 })\\
&=jc_{\tau_m}+\sum_{k=1}^{j-1}{k+1 \choose 2 }\\
&=jc_{\tau_m}+{j+1 \choose 3 }.
\end{align*}
Thus,
\begin{equation*}
D_\tau Q_1^j=(jc_{\tau_m}+{j+1 \choose 3 }) Q_1^{j-1}+ \text{ lower order terms on } Q_1.
\end{equation*}
Now taking into account (\ref{Qj}) and since every  $Q_j$ can be written as in (\ref{Q_j como pot de Q_1}), comparing coefficients it results
\begin{equation}\label{b}
\frac{a_j}{2j+1}=b_j-b_{j+1} \ \ \ \text{and} \ \ \ a_1=c_{\tau_m}. 
\end{equation}
At the same time, note that
\begin{equation*}
\begin{split}
a_{j+1}Q_j=D_\tau [Q_{j+1}]=D_\tau [Q_1^{j+1}+b_{j+1}r^2Q_1^{j-1}+\text{ lower order terms on } Q_1]\\
=D_\tau Q_1^{j+1}+2b_{j+1}Q_1^j+b_{j+1}r^2D_\tau Q_1^{j-1}+\text{ lower order terms on } Q_1\\
=((j+1)c_{\tau_m}+{j+2 \choose 3 }) Q_1^{j}+2b_{j+1}Q_1^{j}+ \text{ lower order terms on } Q_1.
\end{split}
\end{equation*}
Comparing coefficients one more time we get 
\begin{equation}\label{c}
a_{j+1}=(j+1)c_{\tau_m}+{j+2 \choose 3 }+2b_{j+1}. 
\end{equation}
Then, from (\ref{b}) and (\ref{c}) it holds
\begin{equation*}
a_{j+1}-a_j=c_{\tau_m}+{j+1 \choose 2 }-2\frac{a_j}{2j+1} 
\end{equation*}
or equivalently,
\begin{equation}\label{geometrizar}
(2j+1)a_{j+1}-(2j-1)a_j=(c_{\tau_m}+{j+1 \choose 2 })(2j+1). 
\end{equation}
Finally, when we sum over $j$ in (\ref{geometrizar}), the coefficients  $a_j$ can be calculated as
\begin{equation*}
a_{k+1}=\frac{(k+1)^2}{2k+1}(c_{\tau_m}+\frac{k^2+2k}{4})\ \ \ \text{ and } \ \ \ a_1=c_{\tau_m}.
\end{equation*}

\bibliographystyle{spmpsci}      

\bigskip

\begin{flushright}
\textsc{Rocío Díaz Marín\\
FaMAF, Universidad Nacional de Córdoba,\\
Av. Medina Allende s/n , Ciudad Universitaria\\
5000 Córdoba, Argentina\\
email:} \url{rpd0109@famaf.unc.edu.ar}
\bigskip

\textsc{Fernando Levstein\\
FaMAF, Universidad Nacional de Córdoba,\\
Av. Medina Allende s/n , Ciudad Universitaria\\
5000 Córdoba, Argentina\\
email:} \url{levstein@famaf.unc.edu.ar}
\end{flushright}

\end{document}